\newtheorem{theorem}{Theorem}[section]
\newtheorem{lemma}[theorem]{Lemma}
\newtheorem{proposition}[theorem]{Proposition}
\theoremstyle{definition}
\newtheorem{definition}[theorem]{Definition}
\newcommand{\pMatrix}[4]{\left(\begin{matrix}#1 & #2 \\ #3 & #4\end{matrix}\right)}
\newcommand\be{\begin{equation}}
\newcommand\ee{\end{equation}}
\newcommand\bee{\begin{equation*}}
\newcommand\eee{\end{equation*}}
\newcommand{\sgn}{\operatorname{sgn}}
\newcommand{\ba}{\begin{eqnarray}}
\newcommand{\ea}{\end{eqnarray}}
\newcommand{\SL}{\operatorname{SL}}
\newcommand{\GL}{\operatorname{GL}}
\newcommand{\Q}{\mathbb{Q}}
\newcommand{\hol}{\operatorname{hol}}
\newcommand{\Hc}{\mathcal{H}}
\newtheorem{Definition}{\bf Definition}[section]
\newtheorem{Prop}[Definition]{\bf Proposition}
\numberwithin{equation}{section}
\theoremstyle{remark}
\newtheorem{remark}[theorem]{Remark}
\newenvironment{singnumalign}{
    \begin{equation}
    \begin{aligned}
}{
    \end{aligned}
    \end{equation}
    \ignorespacesafterend
}
\begin{document}

\title[Holomorphic]
{Holomorphic projection for sesquiharmonic Maass forms }
\author{Michael Allen}\address{Department of Mathematics, Louisiana State University, Baton Rouge, LA}
\email{allenm3@lsu.edu}
\author{Olivia Beckwith}\address{Department of Mathematics,
Tulane University, New Orleans, LA 70118}
\email{obeckwith@tulane.edu}
\author{Vaishavi Sharma}
\address{Mathematics Department, The Ohio State University, Columbus, OH 43210}
\email{sharma.1254@osu.edu}

%    General info
% \subjclass[2010]{Primary ??}

\date{\today}

% \keywords{??}

\begin{abstract}
We study the holomorphic projection of mixed mock modular forms involving sesquiharmonic Maass forms. As a special case, we numerically express the holomorphic projection of a function involving real quadratic class numbers multiplied by a certain theta function in terms of eta quotients. We also analyze certain shifted convolution $L$-series involving mock modular forms and bound certain shifted convolution sums.
\end{abstract}

\maketitle

\section{Introduction}
Harmonic Maass forms find widespread applications in the field of number theory and across mathematics, tracing back to Ramanujan's seminal discovery of mock theta functions and elucidated by the work of Zwegers \cite{Zwegers} and Bruinier and Funke \cite{BruinierFunke}. These functions have been employed in diverse contexts (see \cite{BringmannFolsomOnoRolen} for an overview), including partition statistics, quadratic number fields, representation theory, and various areas within mathematical physics. 

A prominent method for studying the coefficients of harmonic Maass forms is holomorphic projection, in which a real-analytic modular form is projected onto well-studied subspaces of holomorphic modular forms. Many authors have used this technique to show recurrence formulas for arithmetic functions including the coefficients of mock theta functions \cite{ImamogluRaumRichter}, the smallest parts function for partitions \cite{AhlgrenAndersen}, and class numbers \cite{MertensClassNumbers}. Holomorphic projection was also used in \cite{MertensOno} to show that certain shifted convolution Dirichlet series are coefficients of modular forms. More recently, holomorphic projection has been used to study divisibility properties of Hurwitz class numbers by the second author, Raum, and Richter (see \cite{BRR1}, \cite{BRR2}, \cite{BRR3}).  

All of the examples mentioned above compute the holomorphic projection of a product of a harmonic Maass form and a holomorphic modular form. A natural problem is to extend the scope of these methods to more general real-analytic modular forms. There are many possible generalizations one could consider by varying the type of modular form, the weight, and the type of product taken. For any choice, one would hope to use the integral formulas for holomorphic projection maps to compute formulas involving elementary functions and the coefficients of the original functions considered. This is difficult in the most general case due to the shifted convolution $L$-series appearing in the coefficients. 
One would then look for specific examples for which these formulas can be used to give new information about coefficients of modular forms. 

We consider the situation where the harmonic Maass form is replaced by a sesquiharmonic Maass form---a real-analytic modular form with shadow equal to a harmonic Maass form (see \Cref{subsec:sesqui}).  Examples of such forms have appeared recently with Fourier coefficients involving arithmetic constants such as real quadratic class numbers \cite{DukeImamogluToth2011-1} and non-critical $L$-values for holomorphic cusp forms \cite{BringmannDiamantisRaum}. Our main result, \Cref{thm:proj-computation}, computes the holomorphic projection of the product of a weight $1/2$ sesquiharmonic form and a weight $3/2$ theta function.  In Theorem \ref{thm:AAS-projection} we give the specialization of this formula to a sesquiharmonic Maass form defined in \cite{AhlgrenAndersenSamart} which has Fourier coefficients related to class numbers of both real and imaginary quadratic fields. 

To state Theorem \ref{thm:AAS-projection}, we need a few definitions. For any discriminant $d$, let $\mathcal{Q}_d$ be the set of binary quadratic forms of discriminant $d$ which are not negative definite, and let $\mathcal{Q}^{prim}_d$ be the subset of primitive forms in $\mathcal{Q}_d$. We let $h^+(d) \colonequals | \mathcal{Q}^{prim}_d / \SL_2(\mathbb{Z})|$ and note that when $d$ is a positive non-square this is equal to the narrow class number of the unique real quadratic order of discriminant $d$.

The Hurwitz class numbers count $\SL_2(\mathbb{Z})$ classes of binary quadratic forms inversely weighted by stabilizer size:
\begin{equation}\label{eq:Hurwitz-defn}
    H(n) \colonequals \!\!\!\!\!\!\!\!\!\! \sum_{Q \in \SL_2(\mathbb{Z}) \backslash \mathcal{Q}_{-n}} \frac{2}{\operatorname{Stab} (Q)},
\end{equation}
with the convention that $H(0) =  \frac{-1}{12}$ and $H(n)=0$ if $-n$ is neither zero nor a negative discriminant.  An analogue of $H(n)$ for positive non-square discriminants was defined in \cite{DukeImamogluToth2011-2} and for positive square discriminants in \cite{AhlgrenAndersenSamart} as follows.  For positive discriminants $d$, let $R(d)$ denote the regulator
\[
    R(d) \colonequals 
    \begin{cases}
        2 \log \epsilon_d & d \text{ non-square } \\
        2 \log {\sqrt{d}} & d \text{ square},
    \end{cases}
\]
where $\epsilon_d$ is the smallest unit $> 1$ of norm $1$ in the quadratic order $\mathcal{O}$ of discriminant $d$, and define the general Hurwitz function
\begin{equation}\label{eq:general-hurwitz-defn}
    h^*(d) \colonequals \frac{1}{2 \pi} \!\!\!\!\!\!\!\!\!\!\!\!\!\! \sum_{\substack{\ell^2 | d \\ d/\ell^2 \text{ is a discriminant}}}\!\!\!\!\!\!\!\!\!\!\!\!\!\! R(d/\ell^2) h^+(d/\ell^2).
\end{equation}

%%%%%%%%%%%%%%%%%%%%%%%%
%%%%%%%%%%%%%%%%%%%%%%%%%
%%%%%%%%%%%%%%%%%%%%%%%%%%%%
Let $\gamma$ denote the Euler--Mascheroni constant and define $\delta_{\square}(N)$ to be 1 if $N$ is a perfect square and $0$ otherwise. 

For any Dirichlet character $\chi$, we define 
\begin{align*}
r_{\chi}(h) &\colonequals  \delta_{\square}(h) \chi(\sqrt{h}) \sqrt{h} \left( \frac{\gamma - \log (16 \pi)}{4 \pi} + \frac{\gamma + \log (4 \pi \sqrt{h})}{4 \pi} +  \frac{1}{12 \sqrt{h}}\right) \\
&+\sqrt{\pi} h  \sum_{\substack{ n + m^2 = h \\ n <0 \\ m > 0}}  \frac{ H(|n|) m \chi(m)}{\sqrt{|n|} (m + \sqrt{|n|})m} + \sum_{\substack{n + m^2 = h \\ n, m > 0}} \frac{h^*(n)}{\sqrt{n}} \chi (m)m  \\
&+ \!\!\!\!   \sum_{\substack{m^2+n^2 =h \\ m, n > 0}} \!\! \frac{\chi (m)}{2\pi}  \left( 2 n \arctan\left( \frac{m}{n}\right) - m \log \left(\frac{4n^2}{h}\right) \right).
\end{align*}

We have the following special case of \Cref{thm:proj-computation}, which is obtained by setting $g = \theta_{\chi}$ and $F = Z$ (see Section \ref{sec:background}) in \Cref{thm:proj-computation}

\begin{theorem}\label{thm:AAS-projection}
Let $\chi$ be an odd Dirichlet character modulo $m$. The function $\sum_{h=1}^{\infty} r_{\chi}(h) q^h$ belongs to the space $S_2( \Gamma_0(4m^2))$. 
\end{theorem}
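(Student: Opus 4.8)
The plan is to obtain this statement as a direct specialization of \Cref{thm:proj-computation}, which computes the holomorphic projection $\pi_{\mathrm{hol}}(F\cdot g)$ of the product of a weight $1/2$ sesquiharmonic Maass form $F$ and a weight $3/2$ theta function $g$, realizing it as a weight $2$ holomorphic cusp form whose $h$-th Fourier coefficient is an explicit sum of products of the Fourier coefficients of $F$ and $g$ against elementary shifted-convolution weights. The first step is to record the two specific inputs. Here $g=\theta_\chi$ is the unary weight $3/2$ theta series attached to the odd character $\chi$, whose $h$-th coefficient is supported on perfect squares $h=m^2$ and is a multiple of $\chi(m)\,m$ (both signs $\pm m$ contributing equally since $\chi$ is odd); and $F=Z$ is the sesquiharmonic form of \cite{AhlgrenAndersenSamart}, whose Fourier coefficients encode the arithmetic data $H(\abs n)$ at negative indices, $h^{*}(n)/\sqrt n$ at positive indices, and the regularized constants involving $\gamma$ and logarithms at the distinguished (constant/square) indices.

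Since $\theta_\chi\in S_{3/2}(\Gamma_0(4m^2))$ and $Z$ has level dividing $4$, the product $Z\cdot\theta_\chi$ is modular of weight $2$ on $\Gamma_0(4m^2)$, which fixes the level appearing in the statement, and holomorphic projection preserves this level. Cuspidality then follows from \Cref{thm:proj-computation} together with the fact that $\theta_\chi$ is itself a cusp form, so no Eisenstein part survives; the vanishing of the constant term is reflected in the expansion beginning at $h=1$.

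The substantive step is to check that substituting these coefficients into the coefficient formula of \Cref{thm:proj-computation} reproduces $r_\chi(h)$ exactly. Because $\theta_\chi$ is supported on squares, every term degenerates into a shifted convolution in which one summation index is forced to be a square $m^2$ and the complementary index is $n=h-m^2$. The contribution with $n<0$ pairs the Hurwitz class numbers $H(\abs n)$ from the holomorphic part of $Z$ against $\theta_\chi$, producing the first shifted-convolution sum; the contribution with $n>0$ pairs the real-quadratic data $h^{*}(n)$ against $\theta_\chi$, producing the second; and the $\delta_{\square}$-term collects the diagonal contribution of the distinguished coefficients of $Z$ when $h$ is itself a square.

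The main obstacle is the final sum $\sum_{m^2+n^2=h}$ carrying the $\arctan$ and $\log$ weights: this does not arise from a product of holomorphic Fourier coefficients, but from the holomorphic projection of the lowest non-holomorphic (Eichler-integral) piece of the sesquiharmonic form $Z$ against $\theta_\chi$. Since the iterated shadow of $Z$ is a weight $1/2$ theta function supported on squares, this piece contributes a \emph{double-square} convolution $m^2+n^2=h$, and evaluating it requires computing the relevant holomorphic projection integrals against the incomplete-gamma/exponential-integral profiles of the non-holomorphic part of $Z$ --- precisely the delicate integral computation performed in the proof of \Cref{thm:proj-computation}. The remaining work is to specialize those integrals and recognize the resulting closed forms as the stated $\arctan$ and logarithmic expressions. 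Once all four families of terms are matched, the identity $\pi_{\mathrm{hol}}(Z\cdot\theta_\chi)=\sum_{h\ge 1} r_\chi(h)\,q^h$ holds, and membership in $S_2(\Gamma_0(4m^2))$ follows.
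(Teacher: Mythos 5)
Your proposal takes exactly the paper's route: the paper's entire proof is ``Let $F=Z$ and $g=\theta_\chi$ in Theorem~\ref{thm:proj-computation},'' and your term-by-term matching of the four families of contributions to $r_\chi(h)$ is correct. One small slip worth fixing: the Hurwitz class numbers $H(|n|)$ sit in the \emph{harmonic} ($\beta$-integral) part of $Z$, not its holomorphic part, which is precisely why that contribution acquires the $\frac{1}{(m+\sqrt{|n|})m}$ weight from Lemma~\ref{thm:harmonic-int-0} rather than appearing as a plain product of holomorphic coefficients.
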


If we let $\chi_4(n) = \left( \frac{-4}{n} \right)$, Theorem \ref{thm:AAS-projection} tells us $\sum_{h=1}^{\infty} r_{\chi_4}(h) q^h$ belongs to the space $S_2( \Gamma_0(64))$. Using \cite{lmfdb}, we find that the space  $S_2( \Gamma_0(64))$ is spanned by $f_1, f_2, f_3,$ where $f_1$ is the newform
$$f_1 (z) \colonequals \frac{\eta(8z)^8}{\eta(4z)^2 \eta(16z)^2} = q +2q^5 - 3 q^9 - 6 q^{13} + 2 q^{17} + \cdots$$ and $f_2, f_3$ are the oldforms
$$f_2(z) \colonequals \eta(4z)^2 \eta(8z)^2 = q - 2 q^5 - 3 q^9 + 6 q^{13} + 2 q^{17} - q^{25} - \cdots $$ and 
$$f_3 (z) \colonequals f_2(z) | V(2) = q^2 - 2 q^{10} - 3 q^{18} + 6 q^{26} + 2 q^{34} - q^{50} - \cdots  $$ 

Explicitly, we have
\begin{align*}
\sum_{h=1}^{\infty} r_{\chi_4}(h) q^h &= \left(\frac{1}{2} r_{\chi_4}(1) + \frac{1}{4} r_{\chi_4}(5)\right) f_1 + \left(\frac{1}{2} r_{\chi_4}(1) - \frac{1}{4} r_{\chi_4}(5)\right) f_2 + r_{\chi_4}(2) f_3 \\
&\approx (0.028599) f_1 + (0.0000017) f_2 + (0.0579284) f_3 
\end{align*}

Various arithmetic patterns among the $r_{\chi_4}(h)$ follow---see Section \ref{sec:numerics} for further discussion. 

We can view the presence of the infinite sums in the coefficients as a weight $\frac{3}{2}$ Eisenstein version of a theorem of \cite{MertensOno}, connecting special values of shifted convolution $L$-series defined by \cite{HoffsteinHulse} to mock modular forms. 
Given series $g_i (\tau) = \sum_{n=1}^{\infty} a_i(n) q^n$, $i = 1,2$, Hoffstein and Hulse \cite{HoffsteinHulse} define the shifted convolution series 
$$
D(g_1,g_2, h, s)\colonequals \sum_{n = 1}^{\infty} \frac{a_1( n+h) \overline{a_2(n)} }{n^s}.
$$
If $g_1,g_2$ are holomorphic cusp forms of the same integral weight, then \cite{HoffsteinHulse} shows that $D(g_1,g_2,h,s)$ has a meromorphic continuation in $s$. Mertens and Ono \cite{MertensOno} related these series to modular forms and harmonic Maass forms by computing
\begin{equation}\label{eq:MertensOno}
\pi_{2}^{\hol, \operatorname{reg}} (M_{g_1}^- \cdot g_2) = (k-2)! \sum_{h =1}^{\infty} ( D( g_2, g_1, -h, k-1) - D(\overline{g_1}, \overline{g_2}, h, k-1)) q^h
\end{equation}
where $k$ is the weight of both $g_1$ and $g_2$, $\pi_{2}^{\hol, \operatorname{reg}}$ is a regularized version of the holomorphic projection operator, and $M_{g_1}^-$ is the nonholomorphic Eichler integral 
$$
M_{g_1}^- (\tau) = \sum_{n=1}^{\infty} n^{1 -k} \overline{a_1(n)} \Gamma(k-1, 4 \pi n y) q^{-n}.
$$
Formally, the relation \eqref{eq:MertensOno} holds for other weights and for non-cusp forms as well, even though the two shifted convolution sums on the right hand side may not converge. For $\operatorname{Re}(s) > \frac{3}{2}$, we have
\begin{equation}\label{eq:symmetrized}
D( \theta_{\chi}, H, -h, s) - D(H, \theta_{\overline{\chi}}, h, s)
= \sum_{m^2 - n = h}^{\infty} H(n) m \chi(m) \left( \frac{1}{n^s} - \frac{1}{m^{2s}}    \right) 
\end{equation}
 When $s = \frac{1}{2}$, the series on the left do not converge but the series on the right does and is equal to a sum appearing in Theorem \ref{thm:AAS-projection}:
$$ 
 \sum_{\substack{m^2 + n = h \\ n < 0 \\ m > 0 }}^{\infty} H(|n|) m \chi(m) \left( \frac{1}{\sqrt{|n|}} - \frac{1}{m} \right) =
h \sum_{\substack{ n + m^2 = h \\ n <0 \\ m > 0}} \! \frac{ H(|n|) m \chi(m)}{ \sqrt{|n|} (m + \sqrt{|n|})m}
$$
 In Section \ref{sec:shiftedconvolution}, we analyze the second series on the left hand side of \eqref{eq:symmetrized} and prove bounds for related shifted convolution sums. These bounds are proved using methods from a recent preprint by Walker \cite{Walker}, who used the spectral theory of automorphic forms to study the self-correlations of Hurwitz class numbers.

In Section \ref{sec:background}, we introduce the needed background on modular forms, sesquiharmonic Maass forms, and holomorphic projection. In Section \ref{sec:shiftedconvolution}, we analyze the poles of certain shifted convolution $L$-series and use them to prove a bound for shifted convolution sums of mock modular coefficients and cusp form coefficients. In Section \ref{sec:proofs} we prove our main theorem and Theorem \ref{thm:AAS-projection}. Section \ref{sec:numerics} provides some numerical data and patterns related to the $r_{\chi_4}(n)$ that follow from Theorem \ref{thm:AAS-projection}. 

\section{Background}\label{sec:background}
\subsection{Modular forms}
Here we give some standard notation and facts from the theory of modular forms. See a text such as Chapters 1-3 of \cite{OnoCBMS} for details.
Throughout, $\mathbb{H} \colonequals \{ z \in \mathbb{C}: \operatorname{Im}(z) > 0 \}$ denotes the upper half plane and $z = x+iy$ represents an element of $\mathbb{H}$, with $x,y \in \mathbb{R}$. We let $e(w) = e^{2 \pi iw}$ for all $w \in \mathbb{C}$ and $q^n \colonequals e(n z)$ for any $n \in \mathbb{Q}$. 
We will use the action of $ \GL_2^+(\Q)$ on $\mathbb{H}$ given by fractional linear transformation:
$$
\pMatrix abcd   \cdot z \colonequals \frac{az+b}{cz+d}
$$
and for any function $f : \mathbb{H} \to \mathbb{C}$ and $k \in \frac{1}{2} \mathbb{Z}$, we let 
$$
f|_k \pMatrix abcd  (z) \colonequals (cz+d)^{-k} f \left( \pMatrix abcd   \cdot z \right).
$$ 

For $k, N \in \mathbb{Z}$, $N \ge 1$, and any Dirichlet character $\chi$ modulo $N$, let $M_k(N,\chi)$ and $S_k(N,\chi)$ denote the usual spaces of weight $k$ holomorphic modular forms and cusp forms on $\Gamma_0(N)$ with character $\chi$, simplified as $M_k(N), S_k(N)$ when $\chi$ is trivial.

The $\theta$ multiplier system is given by 
$$
\nu_{\theta} (\gamma ) \colonequals \epsilon_d^{2k} \left( \frac{c}{d} \right)
$$
for $\gamma = \pMatrix abcd \in \Gamma_0(4)$, where
$$
\epsilon_d = \begin{cases}
1 & d \equiv 1 \pmod{4} \\
i & d \equiv 3 \pmod{4},
\end{cases}
$$
and $\left( \frac{c}{d} \right)$ is the Kronecker symbol.

For $k \in \frac{1}{2} + \mathbb{Z}$, $N \in \mathbb{N}$, and any Dirichlet character $\chi$ modulo $4N$, we let $M_k(4N,\chi)$ (resp. $S_k(4N,\chi)$) be the space of holomorphic functions $f:\mathbb{H} \to \mathbb{C}$ which satisfy 
$$
f\left( \frac{az+b}{cz+d} \right) = \chi(d) \nu_{\theta}^{2k}( \gamma )  (cz+d)^k f(z) 
$$
for all $\gamma = \pMatrix abcd \in \Gamma_0(4N)$ and which are holomorphic (resp. vanishing) at all cusps of $\Gamma_0(4N)$.

\subsection{Holomorphic unary theta functions}
Fundamental examples of half integral weight modular forms are given by theta functions. For any Dirichlet character $\chi$ modulo $m$, let
\begin{equation}\label{eq:theta-defn}
    \theta_{\chi}(z) \colonequals \sum_{ n \in \mathbb{Z}} \chi (n) n^{\nu} q^{n^2}
\end{equation}
where $\nu \colonequals \frac{1- \chi(-1)}{2}$.
Then we have (see Theorem 1.44 \cite{OnoCBMS})
$$
\theta_{\chi} \in \begin{cases} M_{\frac{1}{2}} (4 m^2, \chi) & \nu =0 \\
S_{\frac{3}{2}} (4 m^2, \chi \chi_4) & \nu = 1.
\end{cases}
$$

%%%
\subsection{Holomorphic Projection}
For any congruence subgroup $\Gamma$ and integer $k$, we let $\mathbb{S}_k(\Gamma)$ denote the space of real-analytic functions $f :\mathbb{H} \to \mathbb{C}$ which transform as weight $k$ modular forms with respect to $\Gamma$ and have exponential decay at cusps.

Sturm \cite{SturmHolProj} proved a formula for the holomorphic projection of a modular form with certain growth conditions and weight greater than 2. This formula was modified by \cite{GrossZagier} to include weight $2$ and allow for more general growth conditions at cusps. 

\begin{theorem}[Proposition 6.2 \cite{GrossZagier}]\label{thm:proj-formula}
Let $k \ge 2$ be an integer and $f (z)= \sum_{n \in \mathbb{Z}} a(n,y) q^n \in \mathbb{S}_k (\Gamma_1(N))$.
Let
$$
a_n \colonequals \lim_{s \to 0} \frac{ (4 \pi n)^{k-1}}{(k-2)!} \int_0^{\infty} a(n,y) e^{- 4 \pi n y} y^{s+k-2} dy.
$$
Then
$$
\pi_k^{hol} (f) (z)  \colonequals \sum_{n =1}^{\infty} a_n q^n
$$
lies in $S_k(\Gamma_1(N))$ and satisfies the property that for all $g \in S_k(\Gamma_1(N))$, we have
\begin{equation}\label{eq:projection-condition}
\langle f, g \rangle_{\mathrm{Pet}} = \langle \pi_k^{hol}(f),g \rangle_{\mathrm{Pet}}.
\end{equation}
\end{theorem}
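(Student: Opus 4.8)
I would read the statement as asserting two things: that the series $\sum_n a_n q^n$, with $a_n$ given by the explicit limit of integrals, is a holomorphic cusp form, and that this cusp form is the orthogonal projection of $f$ in the sense of \eqref{eq:projection-condition}. My plan is to produce the projecting cusp form abstractly first, then identify its Fourier coefficients with the $a_n$ by testing against Poincaré series, so that the roles of the two assertions are reversed from the way they are phrased: rather than build $\sum a_n q^n$ and check it lands in $S_k$, I build the projection and compute that its coefficients are the $a_n$.

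First I would note that because $f$ has exponential decay at every cusp, the assignment $g \mapsto \langle f, g\rangle_{\mathrm{Pet}}$ converges for every cusp form $g$ (the integrand decays exponentially against the boundedness of $g$) and defines a conjugate-linear functional on the finite-dimensional Hilbert space $S_k(\Gamma_1(N))$. By the Riesz representation theorem on this finite-dimensional inner product space there is a unique $P \in S_k(\Gamma_1(N))$ with $\langle f, g\rangle_{\mathrm{Pet}} = \langle P, g\rangle_{\mathrm{Pet}}$ for all $g$. Setting $\pi_k^{\hol}(f) := P$ already supplies a cusp form satisfying \eqref{eq:projection-condition}; the entire remaining task is to show that the $m$-th Fourier coefficient of $P$ equals the $a_m$ of the statement.

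To extract those coefficients I would pair $P$ against the weight-$k$ Poincaré series $P_{k,m}$ attached to the cusp $\infty$, which is itself a cusp form for $m \ge 1$. Writing $P = \sum_m b(m)q^m$, the reproducing property of Poincaré series gives $\langle P, P_{k,m}\rangle_{\mathrm{Pet}} = \frac{(k-2)!}{(4\pi m)^{k-1}}\, b(m)$, while the adjoint relation just established gives $\langle P, P_{k,m}\rangle_{\mathrm{Pet}} = \langle f, P_{k,m}\rangle_{\mathrm{Pet}}$. The latter is then evaluated by the standard unfolding: replacing $P_{k,m}$ by its defining sum over $\Gamma_\infty\backslash\Gamma_1(N)$ of $\bigl(e(mz)\bigr)|_k\gamma$ unfolds the integral over $\Gamma_1(N)\backslash\mathbb{H}$ to an integral over the strip $\Gamma_\infty\backslash\mathbb{H}$, and integrating in $x \in [0,1]$ selects the $m$-th Fourier term of $f$ by orthogonality of the additive characters, leaving $\int_0^\infty a(m,y)\, e^{-4\pi m y}\, y^{k-2}\,dy$. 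Solving $\frac{(k-2)!}{(4\pi m)^{k-1}}b(m) = \int_0^\infty a(m,y) e^{-4\pi m y} y^{k-2}\,dy$ for $b(m)$ reproduces the asserted formula (at $s=0$).

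The main obstacle, and the reason the statement carries the limit $\lim_{s\to0}$ and the exponent $y^{s+k-2}$, is the weight $k=2$ case, where the Poincaré series converges only conditionally and the unfolding is not absolutely justified. I would handle this by Hecke's trick: insert a factor $\operatorname{Im}(\gamma z)^{s}$ into the Poincaré series to obtain an absolutely convergent series for $\operatorname{Re}(s)$ large, where all of the manipulations above are legitimate and produce the regularized integral $\int_0^\infty a(m,y)\,e^{-4\pi m y}\, y^{s+k-2}\,dy$. The delicate point is then to show that this integral continues analytically to $s=0$ and that its value there is genuinely the $m$-th coefficient of the cusp form $P$; here the exponential decay of $f$ controls the behavior as $y\to\infty$, the inserted $y^{s}$ controls $y\to 0$ in the half-plane of convergence, and one must justify interchanging the limit in $s$ with both the integration and the passage from the regularized kernel to the honest projection. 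For $k>2$ the series converges absolutely and one may set $s=0$ throughout, so the regularization is cosmetic and only the unfolding computation of the previous paragraph is needed.
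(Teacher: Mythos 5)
Your argument is correct and coincides with the proof in the sources the paper itself relies on---the paper gives no proof of this statement, only the pointer that it is Sturm's theorem \cite{SturmHolProj} for $k>2$ and Proposition 6.2 of \cite{GrossZagier} with $\alpha(M)=\beta(M)=0$ for $k=2$, and those references argue exactly as you do: realize $\pi_k^{\hol}(f)$ abstractly as the Riesz representative of $g\mapsto\langle f,g\rangle_{\mathrm{Pet}}$ on the finite-dimensional space $S_k(\Gamma_1(N))$, then extract its coefficients by unfolding against Poincar\'e series, inserting Hecke's $y^s$-factor to regularize the weight-$2$ case. The one step you flag but defer---showing that at $k=2$ the limit $\lim_{s\to 0}\langle f,P_{2,m,s}\rangle$ genuinely computes the $m$-th coefficient of the honest cusp form $P$, with no nonholomorphic or quasimodular correction surviving the limit (note $P_{2,m,s}$ is not itself in $S_2$, so your adjoint relation cannot be applied to it directly)---is precisely the content of Gross--Zagier's growth analysis, and it is exactly where the hypothesis $f\in\mathbb{S}_k(\Gamma_1(N))$ of exponential decay at \emph{all} cusps enters: it forces their correction terms to vanish, which is the point of the paper's remark that quasimodularity ``isn't possible here,'' so your outline is complete modulo quoting that analysis.
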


This is proven in \cite{SturmHolProj} for $k >2$. For $k =2$, take $\alpha(M) = \beta(M)=0$ in Proposition 6.2 of \cite{GrossZagier}, and replace $\Gamma_0(N)$ with $\Gamma_1(N)$. The $k=2$ case is also used in \cite{BRR1} for $\Gamma_1(N)$. In that formulation the holomorphic projection can be quasimodular, which isn't possible here because we are assuming $f$ has exponential decay at cusps.  

\subsection{Harmonic Maass forms}\label{subsec:harmonic}

To define the classes of real-analytic modular forms of interest, we need two differential operators.  We have the Bruinier-Funke  operator $\xi_k$ defined by 
$$
\xi_k = i y^k \overline{ \left( \frac{\partial}{\partial x} + i \frac{ \partial}{\partial y} \right) }.
$$
and the weight $k$ hyperbolic Laplacian
$$
\Delta_k = -y^2 \left( \frac{ \partial^2}{\partial x^2} + \frac{\partial^2}{ \partial y^2} \right) + iky \left( \frac{ \partial}{\partial x} + i \frac{\partial}{\partial y} \right) = \xi_{2-k} \xi_k.
$$

One can verify the essential relation 
$$
\xi_k (F) |_{2-k} \gamma = \xi_k ( F|_k \gamma),
$$
which shows that $\xi_k$ maps functions on $\mathbb{H}$ which transforms as weight $k$ modular forms to functions which transform as weight $2-k$ modular forms.

Let $k \in \frac{1}{2} \mathbb{Z}$, and let $N$ be a positive integer such that $4|N$ if $k \not\in \mathbb{Z}$.

\begin{definition}
We call a real-analytic function $f: \mathbb{H} \to \mathbb{C}$ a harmonic Maass form of weight $k$ on $\Gamma_0(N)$ if
\begin{enumerate}
\item For all $\gamma \in \Gamma_0(N)$, we have
$f|_k \gamma = \begin{cases}
f & k \in \mathbb{Z} \\
\nu_{\theta}(\gamma)^{2k} f    & k \in \frac{1}{2}+\mathbb{Z}.
\end{cases}$
\item We have $\Delta_k f = 0$.
\item The function $f$ has polynomial growth at all the cusps of $\Gamma_0(N)$.
\end{enumerate}
\end{definition}
We let $H_k(N)$ denote the space of such functions.

It is well known (see \cite{BringmannFolsomOnoRolen}) that each $f \in H_k(N)$  has a Fourier expansion of the form
\begin{equation}\label{eq:HarmonicFourier}
f(z) = \sum_{n \gg -\infty} c^+(n) q^n + c^-(0) y^{1-k} + \sum_{\substack{n \ll \infty \\ n \neq 0 }} c^-(n) \Gamma(1-k, 4 \pi |n| y ) q^{-n}.
\end{equation}

In $1975$, Zagier \cite{Zagier1} discovered the weight $\frac{3}{2}$ moderate growth harmonic Maass form on $\Gamma_0(4)$, given by 
\begin{align} \label{eq:Hcdef}
\Hc(\tau)  := -\frac{1}{12} +  \sum_{n \geq 1} H(n) q^n + \frac{1}{8\pi\sqrt{v}} + \frac{1}{4\sqrt{\pi}} \sum_{n \geq 1} n  \Gamma\left(-\frac{1}{2},4\pi n^2 v \right) q^{-n^2}.
\end{align}
Theorem 1.2 of \cite{BeckwithMono} defines higher level versions of this function. 

\subsection{Sesquiharmonic Maass forms}\label{subsec:sesqui}
 
\begin{definition}
A real-analytic function $F: \mathbb{H} \to \mathbb{C}$ is a sesquiharmonic Maass form of weight $k$ with respect to $\Gamma_0(N)$ if 
\begin{enumerate}
\item For all $\gamma \in \Gamma_0(N)$, we have $F|_k \gamma = \begin{cases}
F & k \in \mathbb{Z} \\
\nu_{\theta}(\gamma)^{2k} F    & k \in \frac{1}{2}+\mathbb{Z}.
\end{cases}$
\item $(\xi_k \circ \Delta_k) F = 0$.
\item $F$ has at most linear exponential growth at cusps.
\end{enumerate}
\end{definition}
Let $V_k (N) $ be the complex vector space of such functions $F$ for which $\Delta_k(F)$ lies in $M_k(N)$.

To describe the Fourier expansions of these functions when $k = \frac{1}{2}$ we'll use the special functions $\alpha(y)$ and $\beta(y)$ defined in \cite{DukeImamogluToth2011-1} as
\begin{equation}\label{eq:alphadef}
\alpha(y) \colonequals \frac{\sqrt{y}}{4 \pi} \int_0^{\infty} t^{-1/2} e^{- \pi y t} \log ( 1 + t) dt, \qquad y > 0,
\end{equation}
and
\begin{equation}\label{eq:betadef}
\beta (y) \colonequals \frac{1}{\sqrt{\pi}} \Gamma \left(\frac{1}{2},\pi y \right) = \frac{1}{\sqrt{\pi}} \int_{\pi y}^{\infty} t^{1/2 -1} e^{-t} dt.
\end{equation}
%%%%%%

\begin{proposition}\label{prop:fourier-expansion}
Let $F \in V_{\frac{1}{2}} (4N)$. Then $F$ has a Fourier expansion of the form
\begin{singnumalign}\label{eq:SesquiFourier}
F(\tau) &=\left( d(0) + d(1) \log y + d(2) y^{1/2} + d(3) y^{1/2} \log y \right) + \sum_{\substack{n \gg -\infty \\ n \ne 0}} c(n)q^n \\
&+ \sum_{\substack{n \ll \infty \\ n \ne 0}} b(n) \beta(-4 n y) q^n + \sum_{n \ge 1} a(n) \alpha(4 n y) q^{n}.
\end{singnumalign}
\end{proposition}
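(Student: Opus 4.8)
The plan is to peel off the two successive shadows of $F$ and then reconstruct $F$ one Fourier mode at a time. Since $\pmatrix1101 \in \Gamma_0(4N)$ acts trivially through $\nu_{\theta}$, condition (1) gives $F(\tau+1) = F(\tau)$, so $F(\tau) = \sum_{n \in \Z} c_n(y) e(nx)$ for smooth functions $c_n$. By condition (2) the function $g \defeq \Delta_{\frac12} F$ lies in $\ker \xi_{\frac12}$ and is therefore holomorphic, and the defining condition of $V_{\frac12}(4N)$ upgrades this to $g \in M_{\frac12}(4N)$; write $g = \sum_{n \ge 0} \gamma(n) q^n$. Likewise $\phi \defeq \xi_{\frac12} F$ satisfies $\Delta_{\frac32}\phi = \xi_{\frac12}\Delta_{\frac12}F = 0$, so $\phi \in H_{\frac32}(4N)$ and has the expansion \eqref{eq:HarmonicFourier} with $k = \tfrac32$. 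Thus $F$ sits atop the tower $F \xrightarrow{\xi_{1/2}} \phi \xrightarrow{\xi_{3/2}} g$ with $g$ holomorphic, which is the structural meaning of ``sesquiharmonic.''

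First I would substitute the Fourier series into $\Delta_{\frac12} F = g$. A direct computation of $\Delta_{\frac12}$ on $c_n(y) e(nx)$ turns this into, for each $n$, the second--order linear ODE $-y^2 c_n'' - \tfrac12 y c_n' + (4\pi^2 n^2 y^2 - \pi n y) c_n = \gamma(n) e^{-2\pi n y}$ when $n \ge 0$ and the homogeneous version when $n < 0$. For $n \ne 0$ the homogeneous equation has a two--dimensional solution space: one solution is the holomorphic exponential $q^n$ (which lies in $\ker \Delta_{\frac12}$), and the growth hypothesis (3) discards the companion solution, which has genuine exponential growth. The remaining special functions of \eqref{eq:SesquiFourier} enter as follows: for $n < 0$ the surviving second homogeneous solution is the nonholomorphic completion $\beta(-4ny) q^n$ coming from the incomplete--Gamma term of $\phi$, while for $n > 0$ the inhomogeneity $\gamma(n) q^n$ is matched by the Eichler--type particular solution $\alpha(4ny) q^n$. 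The purely holomorphic contributions $c(n) q^n$ come from the residual freedom $\ker \xi_{\frac12}$, and the linear exponential growth bound forces the principal part to be finite, that is $n \gg -\infty$.

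The zero mode is treated separately. Restricting $\xi_{\frac12}\Delta_{\frac12}$ to $n = 0$ yields a degenerate Euler equation whose indicial polynomial is $r^2\!\left(r - \tfrac12\right)$; the repeated root at $0$ forces a logarithmic solution, so the homogeneous solution space is spanned by $1$, $\log y$, and $y^{1/2}$. Solving the inhomogeneous problem determined by the constant term of $g$ (and, through the intermediate relation $\xi_{\frac32}\phi = g$, by any $y^{1/2}$ data carried by the shadow) produces the remaining logarithmic contribution and accounts for the elementary terms $d(0) + d(1)\log y + d(2) y^{1/2} + d(3) y^{1/2}\log y$ of \eqref{eq:SesquiFourier}. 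Assembling the zero mode together with the $n \ne 0$ modes gives the claimed expansion.

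I expect the main obstacle to be the identification step for $\alpha$. Unlike $\beta$, which is a bare incomplete Gamma function \eqref{eq:betadef} and is immediately checked to solve the relevant homogeneous ODE, the function $\alpha$ in \eqref{eq:alphadef} is a genuine double integral, and one must verify the precise intertwining relations: that $\xi_{\frac12}\bigl(\alpha(4ny) q^n\bigr)$ produces the expected weight $\tfrac32$ $\beta$--term and that $\Delta_{\frac12}\bigl(\alpha(4ny) q^n\bigr)$ reproduces the holomorphic coefficient $\gamma(n) q^n$ of $g$. I anticipate this requires differentiating under the integral sign in \eqref{eq:alphadef}, an integration by parts exploiting the $\log(1+t)$ factor, and attention to convergence at both endpoints; this is the technical heart of the argument, whereas the remaining work (matching coefficients and checking the growth condition at the finitely many cusps) is routine.
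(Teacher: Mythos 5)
Your strategy---separating variables and solving the resulting ODE mode by mode---is a legitimate alternative to the paper's proof, which instead quotes the general Fourier expansion for functions annihilated by $\Delta_k^2$ (Theorem 3.3 of \cite{ALR}, applicable because $\xi_{\frac12}\Delta_{\frac12}F=0$ forces $\Delta_{\frac12}^2F=\xi_{\frac32}\bigl(\xi_{\frac12}\Delta_{\frac12}F\bigr)=0$) and then identifies the Whittaker-type terms $u^{j,\pm}_{\frac12,n}$ with $q^n$, $\beta(-4ny)q^n$, and $\alpha(4ny)q^n$ using formulas from \cite{Matsusaka2} and \cite{GradshteynRyzhik}. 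Your outline is sound for the nonzero modes, but two steps as written are incorrect. First, the growth hypothesis does \emph{not} discard the exponentially growing companion solutions: condition (3) permits linear exponential growth, which is precisely the growth rate $e^{2\pi|n|y}$ of those companions, so the correct conclusion (obtained by bounding $c_n(y)=\int_0^1F(x+iy)e(-nx)\,dx$) is only that \emph{finitely many} modes may carry a growing solution. For the holomorphic family this gives the principal part $n\gg-\infty$, which you state; but the identical reasoning applied to the $\beta$-family yields finitely many terms $\beta(-4ny)q^n$ with $n>0$. These occur for genuine elements of $V_{\frac12}(4N)$ (e.g.\ a weight $\frac12$ harmonic Maass form with exponentially growing nonholomorphic principal part, which lies in $V_{\frac12}(4N)$ because its image under $\Delta_{\frac12}$ is $0\in M_{\frac12}(4N)$), and they are exactly why the proposition writes $n\ll\infty$ rather than $n<0$ in that sum. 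Your argument, taken literally, erases them, and is internally inconsistent with your (correct) treatment of the holomorphic principal part.

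Second, the zero mode. There is no inhomogeneous problem to solve there: condition (2) says $\xi_{\frac12}\Delta_{\frac12}F=0$ exactly, and since $\Delta_{\frac12}$ preserves the Fourier mode while $\xi_{\frac12}$ negates it, the zero mode $F_0$ must itself satisfy $\xi_{\frac12}\Delta_{\frac12}F_0=0$. Your own indicial computation (roots $0,0,\tfrac12$) then spans the solution space by $1$, $\log y$, $y^{1/2}$ alone; a direct check gives $\xi_{\frac12}\Delta_{\frac12}\bigl(y^{1/2}\log y\bigr)=-\tfrac14\ne 0$, so in fact $d(3)=0$ for every $F\in V_{\frac12}(4N)$ (consistent with $Z(\tau)$ in Theorem \ref{thm:AAS-form}, whose constant term has no $y^{1/2}\log y$). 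This does not threaten the proposition---the expansion is simply more special than stated---but your claimed derivation of the $d(3)$ term from ``the inhomogeneous problem determined by the constant term of $g$'' is not a correct account of where that term would come from. Finally, you rightly flag the verification that $\Delta_{\frac12}\bigl(\alpha(4ny)q^n\bigr)$ is a nonzero multiple of $q^n$ as the technical heart and leave it undone; the paper's route is the identity $8\pi\alpha(4ny)=\int_{4\pi ny}^{\infty}\Gamma\bigl(-\tfrac12,t\bigr)e^tt^{-1/2}\,dt$, which exhibits $\xi_{\frac12}\bigl(\alpha(4ny)q^n\bigr)$ as an explicit incomplete-Gamma term, after which one further application of $\xi_{\frac32}$ lands on a multiple of $q^n$. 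Until that computation and the two corrections above are supplied, the proof is incomplete.
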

\begin{remark}
We refer to the portions of this decomposition involving the $d(n)$, $c(n)$, $b(n)$, and $a(n)$ coefficients as the constant term, the holomorphic term, the harmonic term, and the sesquiharmonic term, respectively.
\end{remark}
\begin{proof}
We use Theorem 3.3 of \cite{ALR}, which is stated for even integer weights but also holds for half integral weights (for example, it is used in \cite{Matsusaka2}). Let $W_{\kappa,\mu} (y)$ be the W-Whittaker function, and let $\mathcal{M}_{\kappa,\mu}(y)\colonequals W_{-\kappa, \mu} (ye^{\pi i})$. If a real-analytic periodic function $F(\tau)$ on $\mathbb{H}$ satisfies $\Delta_k^2 F(\tau) = 0$, where $k \neq 1$, then Theorem 3.3 of \cite{ALR} implies that $F$ has a Fourier expansion of the form
\begin{equation}\label{eq:biharmonic-fourier}
F(z) = \sum_{n \in \mathbb{Z}} e^{2 \pi i n x} \left( \sum_{j=0}^1 a_{n,j}^- u_{k, n}^{j,-} (y) + a_{n,j}^+ u_{k,n}^{j,+} (y) \right)
\end{equation}
where for $n \neq 0$ we have
\begin{equation*}
u_{k,n}^{j,-}(y) \colonequals y^{-k/2} \frac{\partial^j}{\partial s^j} W_{\sgn(n) \frac{k}{2}, s + \frac{k-1}{2}} ( 4 \pi | n | y) \Big|_{s=0}
\end{equation*}
and
$$
u_{k,n}^{j,+}(y) \colonequals y^{-k/2} \frac{\partial^j}{\partial s^j} \mathcal{M}_{\sgn(n) \frac{k}{2}, s + \frac{k-1}{2}} ( 4 \pi | n | y) \Big|_{s=0},
$$
and for $n=0$ we have
$$
u_{k,0}^{j,+}(y)  \colonequals  (\log y)^j
$$
and
$$
u_{k,0}^{j,-}(y) \colonequals  (-1)^j (\log y)^j y^{1-k}.
$$
We will show that when $k = \frac{1}{2}$, each term in \eqref{eq:biharmonic-fourier} can be expressed via terms in \eqref{eq:SesquiFourier} for $F \in V_{\frac{1}{2}} (\Gamma_0(4N))$.
By (2.2) of \cite{Matsusaka2}, we have
$$
u_{k,n}^{0,-} (y) e^{2 \pi i n x} = 
\begin{cases}(4 \pi n)^{k/2} q^n & n > 0 \\
 (4 \pi |n| )^{k/2} \Gamma(1-k,4 \pi |n| y) q^n  & n < 0.
\end{cases}
$$
Thus the $u_{\frac{1}{2},n}^{0,-}(y) e^{2 \pi i n x}$ terms in \eqref{eq:biharmonic-fourier} are scalar multiples of $q^n$ for $n>0$ and are scalar multiples of $\beta(-4ny)q^n$ for $n<0$, so these terms are part of the holomorphic and harmonic terms in \eqref{eq:SesquiFourier}, respectively. 

Similarly by (2.3) of \cite{Matsusaka2}, we have for $n<0$
$$
u_{k,n}^{0,+} (y) e^{2 \pi i n x} = (4 \pi |n| e^{\pi i})^{\frac{k}{2}} q^n.
$$
It follows that the $u_{\frac{1}{2},n}^{0,+}(y) e^{2 \pi i n x}$ terms in \eqref{eq:biharmonic-fourier} are multiples of $q^n$ for $n<0$. Since $F$ has linear exponential growth as $y \to \infty$, there are only finitely many of these terms. 

For the other terms in \eqref{eq:biharmonic-fourier}, we use Lemma 2.2 of \cite{Matsusaka1}, which says that if we set $u_{k,n}^{j,-} (y) = 0$ if $j <0$, then we have the relations
\begin{equation}\label{eq:shadow1}
\xi_k \left( u_{k,n}^{j,-} (y) e^{2 \pi i n x}\right) = 
\begin{cases} (j(1-k) u_{2-k, -n}^{j-1,-}(y) + (1-k) u_{2-k, -n}^{j-2,-}(y))e^{-2 \pi i n x} & n > 0 \\
 - u_{2-k,- n}^{j,-} (y) e^{- 2 \pi i n x}  & n < 0.
\end{cases}
\end{equation}
and
\begin{equation}\label{eq:shadow2}
\xi_k \left( u_{k,n}^{j,+} (y) e^{2 \pi i n x} \right) = 
\begin{cases} - u_{2-k,- n}^{j,+} (y) e^{- 2 \pi i n x} & n > 0 \\
(j(1-k) u_{2-k, -n}^{j-1,+} + (1-k) u_{2-k, -n}^{j-2,+}(y))e^{- 2 \pi i nx} & n < 0.
\end{cases}
\end{equation}
We deduce from these relations and the assumption that $\xi_{\frac{1}{2}} \xi_{\frac{3}{2}} \xi_{\frac{1}{2}} F = 0$, that $a_{1,n}^- =0$ if $n<0$ and $a_{1,n}^+ =0$ if $n >0$. Similarly, for $n<0$, we have that $\Delta_{\frac{1}{2}} (u_{\frac{1}{2},n}^{1,+} (y) e^{2 \pi i n x}) $ is a scalar multiple of $q^n$, so by our assumption that $\Delta_{\frac{1}{2}} F \in M_{\frac{1}{2}} (4N)$, we have $a_{1,n}^+ = 0$. 

For $n>0$, it follows from \eqref{eq:shadow1} that $\xi_{\frac{1}{2}}(u_{\frac{1}{2},n}^{0,+}(y) e^{2 \pi i n x})$ is a scalar multiple of $q^{-n}$. It is straightforward to check that $\xi_{1/2} ( \beta(-4 n y) q^{n})$ is also a multiple of $q^{-n}$, and it follows that for $n>0$, $u_{\frac{1}{2},n}^{0,+}(y) e^{2 \pi i n x}$ can be expressed as a linear combination of a holomorphic function and  $\beta(-4 n y) q^n$. 

Using 8.853(3) of \cite{GradshteynRyzhik}, one can show that for $n >0$,
$$
8 \pi \alpha(4 n y) = \int_{4 n \pi y}^{\infty} \Gamma \left(-\frac{1}{2},t \right) e^t t^{-1/2} dt. 
$$
It follows that for $n > 0$,
$$
\xi_{\frac{1}{2}} ( \alpha(4 n y) q^n) = \frac{1}{8 \pi} q^{-n} \Gamma \left(-\frac{1}{2},4 \pi n y \right) (4 \pi n)^{\frac{1}{2}}.
$$
Using \eqref{eq:shadow1}, we deduce that for $n>0$, $u_{\frac{1}{2},n}^{1,-} (y) e^{2\pi i n x}$ is a linear combination of $ \alpha(4 n y) q^n$ and holomorphic terms, which can also be shown explicitly by differentiating the integral formula for $W_{\frac{1}{4},  \frac{1}{4}-s} (4 \pi |n| y)$ given by 9.222 of \cite{GradshteynRyzhik}.

Thus the Fourier expansion in \eqref{eq:biharmonic-fourier} takes the form of \eqref{eq:SesquiFourier} for $F \in V_{\frac{1}{2}} (4N)$, completing the proof.
\end{proof}

A sesquiharmonic Maass form whose coefficients involve class numbers of positive discriminant was discovered in \cite{DukeImamogluToth2011-1}. The coefficients of square index of this function were computed in \cite{AhlgrenAndersenSamart}. Here we briefly state Theorem 2 of \cite{AhlgrenAndersenSamart}.
\begin{theorem}[Theorem 2 \cite{AhlgrenAndersenSamart}]\label{thm:AAS-form}
With $H(d), h^*(d), \alpha(n)$, and $\beta(n)$ defined as in \eqref{eq:Hurwitz-defn}, \eqref{eq:general-hurwitz-defn}, \eqref{eq:alpha-defn}, and \eqref{eq:betadef}, respectively, the function $Z(\tau)$ with Fourier expansion given by
\begin{align*}
    Z( \tau) = &\sum_{d > 0} \frac{h^* (d)}{\sqrt{d}} q^d + \sum_{d < 0} \frac{H(|d|)}{\sqrt{|d|}} \beta(4 |d| y) q^d  \\
    &+ \sum_{n \geq 1} 2\alpha(4 n^2 y) q^{n^2} + \left( \frac{\gamma - \log(16 \pi)}{ 4 \pi} \right) - \frac{ \log y}{4 \pi} + \frac{\sqrt{y}}{3}
\end{align*}
is a sesquiharmonic Maass form of weight $\frac{1}{2}$ for $\Gamma_0(4)$.  Recall that here $\gamma$ is the Euler--Mascheroni constant. 
\end{theorem}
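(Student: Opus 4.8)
The plan is to verify the three defining properties of a weight $\frac12$ sesquiharmonic Maass form for the function $Z$: the transformation law under $\Gamma_0(4)$ with multiplier $\nu_\theta$, the differential equation $(\xi_{\frac12}\circ\Delta_{\frac12})Z=0$, and at most linear exponential growth at the cusps. Of these, the growth condition and the differential equation can be read off essentially directly from the shape of the Fourier expansion, so the substance lies in modularity together with the identification of the arithmetic Fourier coefficients. My strategy is to realize $Z$ as a manifestly modular analytic object---a special value of the $s$-derivative of a half-integral weight real-analytic Eisenstein series---and then use the Bruinier--Funke operator to tie $Z$ to Zagier's form $\Hc$ of \eqref{eq:Hcdef}.

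For modularity I would build, for $\operatorname{Re}(s)$ large, the weight $\frac12$ real-analytic Eisenstein series for $\Gamma_0(4)$,
\[
E(\tau,s) \colonequals \sum_{\gamma \in \Gamma_\infty \backslash \Gamma_0(4)} \nu_\theta(\gamma)^{-1}(c\tau+d)^{-\frac12}\operatorname{Im}(\gamma\tau)^{s},
\]
where $\Gamma_\infty$ is the stabilizer of $\infty$ and $c,d$ denote the bottom row of $\gamma$. For every fixed $s$ this transforms in weight $\frac12$ with the $\theta$-multiplier, it admits meromorphic continuation in $s$, and its transformation law is unaffected by differentiation in $s$ since the automorphy factor is independent of $s$. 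One then sets $Z$ equal to the appropriate combination of $E$ and $\partial_s E$ evaluated at the relevant point, summed over the cusps of $\Gamma_0(4)$, so that modularity of $Z$ is automatic. The operation $\partial_s$ is precisely what produces the $-\frac{\log y}{4\pi}$ term of the constant term predicted by \Cref{prop:fourier-expansion}, and the real quadratic class number data $h^{*}(d)$ enters the holomorphic Fourier coefficients through a Kronecker-limit-formula computation of the Fourier expansion of $E(\tau,s)$, exactly in the spirit of \cite{DukeImamogluToth2011-1} and \cite{AhlgrenAndersenSamart}.

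The differential equation and the membership $Z\in V_{\frac12}(4)$ I would establish by computing $\xi_{\frac12}Z$ term by term. Using $\xi_{\frac12}(q^{d})=0$, $\xi_{\frac12}(\log y)=y^{-\frac12}$, $\xi_{\frac12}(y^{\frac12})=\tfrac12$, the relation $\xi_{\frac12}(\alpha(4ny)q^{n})\in\mathbb{C}\,\Gamma(-\tfrac12,4\pi n y)q^{-n}$ coming from the integral identity in the proof of \Cref{prop:fourier-expansion}, and the analogous fact $\xi_{\frac12}(\beta(-4ny)q^{n})\in \mathbb{C}\,q^{-n}$, one matches the resulting expansion against \eqref{eq:Hcdef}. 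The two constant-order terms already force the overall scalar (they give $\tfrac16$ and $-\tfrac{1}{4\pi}y^{-\frac12}$, matching $-2$ times the $-\tfrac1{12}$ and $\tfrac{1}{8\pi\sqrt{y}}$ terms of $\Hc$), and the remaining coefficients are seen to agree, yielding $\xi_{\frac12}Z=-2\Hc$. Since $\Hc\in H_{\frac32}(4)$ is harmonic, $\Delta_{\frac32}\Hc=\xi_{\frac12}\xi_{\frac32}\Hc=0$, whence
\[
(\xi_{\frac12}\circ\Delta_{\frac12})Z=\xi_{\frac12}\xi_{\frac32}\xi_{\frac12}Z=-2\,\xi_{\frac12}\xi_{\frac32}\Hc=0,
\]
which is the defining relation; moreover $\Delta_{\frac12}Z=\xi_{\frac32}\xi_{\frac12}Z=-2\,\xi_{\frac32}\Hc$ is a scalar multiple of the holomorphic theta function $\sum_{n}q^{n^2}\in M_{\frac12}(4)$, so $Z$ lies in $V_{\frac12}(4)$.

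Finally, for growth: at $\infty$ the holomorphic part $\sum_{d>0}\frac{h^{*}(d)}{\sqrt d}q^{d}$ has exponents bounded below, the $\alpha$- and $\beta$-terms decay because of the incomplete gamma factors and the integral defining $\alpha$ in \eqref{eq:alphadef} and \eqref{eq:betadef}, and the $\log y$ and $y^{\frac12}$ terms are sub-exponential, so growth at $\infty$ is acceptable; the behavior at the remaining cusps of $\Gamma_0(4)$ follows from modularity together with the cuspwise expansions produced by the Eisenstein construction. I expect the main obstacle to be modularity intertwined with the exact determination of the arithmetic coefficients: producing the Eisenstein realization and computing its Fourier expansion sharply enough to pin down both the real quadratic contributions $h^{*}(d)/\sqrt d$ and the precise constants $\frac{\gamma-\log(16\pi)}{4\pi}$ and $\tfrac13$ (the coefficient of $y^{\frac12}$) is delicate, requiring a careful Kronecker-limit-formula analysis and systematic tracking of the contributions from every cusp.
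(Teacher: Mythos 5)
The paper itself contains no proof of this statement: Theorem~\ref{thm:AAS-form} is imported verbatim as Theorem 2 of \cite{AhlgrenAndersenSamart}, the underlying form having been constructed in \cite{DukeImamogluToth2011-1}, so your proposal must be measured against those sources. At the level of architecture it matches them: one realizes $Z$ as the $s$-derivative, at the special point, of a combination of weight $\frac12$ real-analytic Eisenstein series with the $\theta$-multiplier attached to the cusps of $\Gamma_0(4)$, so that modularity is automatic, and the differential equation comes from the relation to Zagier's form. Your consistency checks are essentially right: $\xi_{\frac12}(\log y)=y^{-1/2}$ and $\xi_{\frac12}(y^{1/2})=\tfrac12$ give the constants of $-2\Hc$, and $\xi_{\frac12}\bigl(\tfrac{H(n)}{\sqrt{n}}\beta(4ny)q^{-n}\bigr)=-2H(n)q^n$ matches as well. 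One caution on the $\alpha$-family: differentiating the identity $8\pi\alpha(4ny)=\int_{4\pi ny}^{\infty}\Gamma\bigl(-\tfrac12,t\bigr)e^{t}t^{-1/2}\,dt$ at the lower limit produces $\xi_{\frac12}(\alpha(4ny)q^n)=-\tfrac{\sqrt{n}}{4\sqrt{\pi}}\Gamma\bigl(-\tfrac12,4\pi ny\bigr)q^{-n}$, with a minus sign (the formula displayed in the paper's proof of Proposition~\ref{prop:fourier-expansion} appears to be off by this sign); the minus is exactly what your matching with $-2\Hc$ requires, since the nonholomorphic square-index coefficients of $\Hc$ in \eqref{eq:Hcdef} are positive. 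You are also right, and it is worth making explicit, that the $\xi$-computation alone can never yield modularity: $Z$ is determined by $\xi_{\frac12}Z$ only up to a holomorphic weight-$\frac12$ ambiguity, and that ambiguity carries precisely the $h^*(d)$ data, so routing modularity through the Eisenstein construction is the correct logical order.

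The genuine gap is that the decisive step of that route is named but not executed, and it is exactly the content of the theorem. Concretely, you would need to (i) specify the correct linear combination over the cusps of $\Gamma_0(4)$ (equivalently the plus-space projection) and the special value $s_0$ at which $E(\tau,s_0)$ degenerates to a multiple of $\theta$, noting that $s_0$ sits at the edge of the region of convergence, so (ii) the meromorphic continuation of the half-integral weight family and its holomorphy in a neighborhood of $s_0$ must be established before $\partial_s$ may be applied; and (iii) compute the Fourier coefficients of $\partial_s E(\tau,s)\big|_{s=s_0}$ as second-order Taylor data of Whittaker functions multiplied by quadratic (Zagier-type) zeta functions, evaluating both values and derivatives at $s_0$. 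Step (iii) is where everything arithmetic lives: the coefficients $h^*(d)/\sqrt{d}$ for nonsquare $d$ arise from regulator terms in values of these zeta functions (this is \cite{DukeImamogluToth2011-1}), while the square-index case --- the terms $2\alpha(4n^2y)q^{n^2}$ from \eqref{eq:alphadef}, the convention $R(d)=2\log\sqrt{d}$, and the constants $\tfrac{\gamma-\log(16\pi)}{4\pi}$, $-\tfrac{\log y}{4\pi}$, $\tfrac{\sqrt{y}}{3}$ --- is precisely the new computation that constitutes Theorem 2 of \cite{AhlgrenAndersenSamart}, where the relevant Dirichlet series degenerate and second-term asymptotics are unavoidable. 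None of this follows formally from the setup, so as written your proposal establishes the differential equation, the $V_{\frac12}(4)$ membership, and growth at $\infty$, but not the modularity-plus-expansion identification that is the actual assertion.
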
 
 This function is a shift of the function $Z_+ (\tau)$ in \cite{DukeImamogluToth2011-1} by a constant multiple of $\theta(\tau)$ and will be used in Theorem \ref{thm:AAS-projection}. Higher level analogs of $Z(\tau)$ were defined in \cite{BeckwithMono}. In particular, for every square free odd $N$, Theorem 1.3 of that paper gives the Fourier expansion of the analogue of $Z(\tau)$ for the group $\Gamma_0(4N)$.

%%%%%%%%%
\section{Shifted convolution sums}\label{sec:shiftedconvolution}
Throughout this section, let $f \in H_{\frac{3}{2}} (4N)$ have a Fourier expansion as in \eqref{eq:HarmonicFourier} and assume that $f$ has at most polynomial growth at all cusps. Let $g = \sum_{m=1}^{\infty} \ell(m) q^{m^2} \in S_{\frac{3}{2}} (4N, \chi)$ be a cusp form with coefficients supported on square indices. We furthermore assume that $f$ and $g$ satisfy the properties that $f \cdot g $ vanishes at cusps, and for any $h >0$, for all $\epsilon >0$ we have $\overline{c^+(m^2 - h)} \ell(m) = O(m^{2 + \epsilon})$ as $m \to \infty$. 

Define 
$$D_h(s) := \sum_{m^2 >h}^{\infty} \frac{\overline{c^+(m^2-h)} \ell(m)}{m^{2s+ 1}}
$$
for $\operatorname{Re}(s) > 1$. The growth condition above implies that the series converges absolutely on this region. 

Throughout the section, let $\theta \le \frac{7}{64}$ represent partial progress towards the Ramanujan-Petersson Conjecture for weight 0 Hecke-Maass cusp forms. 

\begin{Prop}\label{thm:ShiftedDirichletSeries}
For any $h > 0$, the function $D_h(s)$ has a meromorphic continuation to all of $s \in \mathbb{C}$ which is analytic on the half-plane $Re(s) > \frac{1}{2} + \theta.$
\end{Prop}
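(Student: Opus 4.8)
The plan is to realize $D_h(s)$, up to explicit archimedean factors, as a Rankin--Selberg integral of a weight-$0$ automorphic function against a weight-$0$ Poincaré series, and then to extract its analytic continuation from the spectral decomposition of that Poincaré series. First I would record that, since $f$ and $g$ both transform in weight $3/2$ with multipliers $\nu_\theta^3$ and $\chi\nu_\theta^3$, the product $\Phi(z) \colonequals \overline{f(z)}\,g(z)\,y^{3/2}$ transforms as a $\Gamma_0(4N)$-invariant function of weight $0$ with nebentypus $\chi$; by hypothesis $f\cdot g$ vanishes at the cusps, so $\Phi$ decays rapidly there. Writing $f = f^+ + f^-$ with $f^+ = \sum c^+(n)q^n$ and $f^-$ the nonholomorphic Eichler-integral part, the $h$-th Fourier coefficient of $\overline{f}\,g$ contains the piece $\sum_{m^2>h}\overline{c^+(m^2-h)}\,\ell(m)\,e^{-2\pi(2m^2-h)y}$, together with analogous terms coming from $f^-$ that carry incomplete-Gamma factors.

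Next I would form the weight-$0$ Poincaré series $P_h(z,s)=\sum_{\gamma\in\Gamma_\infty\backslash\Gamma_0(4N)}(\operatorname{Im}\gamma z)^s\,e(h\operatorname{Re}\gamma z)$ with character $\overline\chi$, and unfold the inner product $\langle\Phi,P_h(\cdot,\overline s)\rangle$. Unfolding replaces the sum over $\gamma$ by the single-cell integral $\int_0^\infty (\text{$h$-th Fourier coefficient of }\Phi)\,y^{s-2}\,dy$, whose main term evaluates to $\Gamma(s+\tfrac12)(2\pi)^{-s-1/2}\sum_{m^2>h}\overline{c^+(m^2-h)}\,\ell(m)\,(2m^2-h)^{-s-1/2}$. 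This Dirichlet series differs from $D_h(s)$ only by the replacement of $(2m^2-h)^{-s-1/2}$ with $m^{-2s-1}$, and since the ratio $(m^2/(2m^2-h))^{s+1/2}$ expands into a series whose correction terms converge in a strictly larger half-plane, the two share the same poles throughout $\operatorname{Re}(s)>\tfrac12$. The growth hypothesis $\overline{c^+(m^2-h)}\ell(m)=O(m^{2+\epsilon})$ gives absolute convergence of $D_h(s)$, hence holomorphy, for $\operatorname{Re}(s)>1$ (matching the stated region) and justifies interchanging summation and integration in the unfolding.

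The meromorphic continuation then comes from the spectral expansion of $P_h(\cdot,s)$, equivalently the meromorphic continuation of the resolvent $(\Delta_0-s(1-s))^{-1}$, into its constant term, the cuspidal Maass spectrum $\{u_j\}$ with eigenvalues $\lambda_j=\tfrac14+t_j^2$, and the Eisenstein (continuous) spectrum. Each component continues to all of $s\in\mathbb{C}$, so $\langle\Phi,P_h(\cdot,\overline s)\rangle=\sum_j\langle\Phi,u_j\rangle\,\overline{\langle P_h,u_j\rangle}+(\text{Eisenstein})$ does as well, and the $f^-$ contributions are continued by the same mechanism. The rightmost poles are located by the spectral parameters: the Eisenstein part contributes poles on the line $\operatorname{Re}(s)=\tfrac12$, while the cuspidal terms contribute poles at $s=\tfrac12\pm it_j$, which leave that line only for exceptional eigenvalues $\lambda_j<\tfrac14$. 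Bounding those eigenvalues by the best known progress $\theta\le\tfrac{7}{64}$ toward Ramanujan--Petersson for weight-$0$ Hecke--Maass cusp forms confines all poles to $\operatorname{Re}(s)\le\tfrac12+\theta$, yielding analyticity on $\operatorname{Re}(s)>\tfrac12+\theta$.

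The main obstacle is the interface between the half-integral weight and nonholomorphicity of $f$ on the one hand and the weight-$0$ spectral machinery on the other: one must verify that the Eichler-integral part $f^-$, with its incomplete-Gamma Fourier coefficients, contributes only pieces that continue analytically without creating poles to the right of $\tfrac12+\theta$, and that the archimedean $\Gamma$-factors produced by the unfolding neither cancel spectral poles nor introduce spurious ones in the region of interest. Controlling these two contributions—essentially the analysis carried out by Walker \cite{Walker} for the self-correlations of Hurwitz class numbers, building on the framework of Hoffstein and Hulse \cite{HoffsteinHulse}—is where the real work lies.
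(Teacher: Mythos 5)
Your strategy is the paper's strategy: unfold $y^{3/2}g\overline{f}$ against a weight-$0$ Maass--Poincar\'e series, import the meromorphic continuation and the location of poles (governed by exceptional eigenvalues, hence the line $\operatorname{Re}(s)=\tfrac12+\theta$) from Walker's spectral analysis, and then isolate $D_h(s)$ from the unfolded integral. Two points of divergence are worth flagging. First, you take $P_h(z,s)=\sum(\operatorname{Im}\gamma z)^s e(h\operatorname{Re}\gamma z)$, which leaves the denominators $(2m^2-h)^{s+1/2}$ and forces the binomial-expansion comparison with $m^{-2s-1}$; that step can be made rigorous (the corrections are $D_h(s+k)$ for $k\ge1$, absolutely convergent further left, and $h/(2m^2)<\tfrac12$ uniformly), but it is an avoidable complication. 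The paper instead uses the full exponential $e(h\gamma\tau)$, so the factor $e^{-2\pi hy}$ combines with $e^{-2\pi m^2y}e^{-2\pi(m^2-h)y}$ to give exactly $e^{-4\pi m^2y}$, and the $m>\sqrt h$ piece is literally $(4\pi)^{-(s+1/2)}\Gamma(s+\tfrac12)D_h(s)$ with no correction series. Second, and more importantly, the step you defer as ``where the real work lies''---showing that the constant-term and Eichler-integral contributions create no poles to the right of $\tfrac12+\theta$---is not spectral work at all and is where your write-up stops short of a proof. Those contributions come only from $0<m\le\sqrt h$, hence are \emph{finite} sums, and each summand is an explicit Mellin transform: the $m=\sqrt h$ term gives $\Gamma(s+\tfrac12)$ and $\Gamma(s)$ factors, and each $m<\sqrt h$ term is $\int_0^\infty y^{s-1/2}\Gamma(-\tfrac12,4\pi|m^2-h|y)e^{-4\pi m^2y}\,dy$, which evaluates (via \cite{GradshteynRyzhik} 6.455) to a ratio of Gamma functions times a ${}_2F_1$, with all poles in $\tfrac12\mathbb{Z}_{\le0}$. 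Since these lie far to the left of $\operatorname{Re}(s)=\tfrac12+\theta$ and $1/\Gamma(s+\tfrac12)$ is entire, analyticity of $D_h(s)$ on the stated half-plane follows. You should carry out this finite computation rather than gesture at the spectral machinery for it; once you do, your argument closes up and coincides in substance with the paper's.
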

\begin{proof}

Let $P_h(\tau,s)$ be the weight 0 level $N$ Maass-Poincare series defined by
$$
P_h(\tau,s) := \sum_{\gamma \in \Gamma_{\infty} \backslash \Gamma_0(N)} \operatorname{Im}(\gamma \tau)^s e\left( h \gamma \tau \right) 
$$
for $Re(s) > 2$ and by meromorphic continuation to $s \in \mathbb{C}$. We put $f(\tau) = \sum_{n \in \mathbb{Z}} c(n,y) e^{2 \pi i n x}$. Following the method and much of the notation of \cite{Walker}, Rankin--Selberg unfolding gives
\begin{align*}
I_h(s) &\colonequals < y^{\frac{3}{2}} g(\tau) \overline{f}(\tau) , P_h (\tau, \overline{s} ) >  \\
&= \int_0^{\infty} \int_0^1 y^{\frac{3}{2} + s - 2} g(\tau) \overline{f}(\tau) \overline{q^{h}} dx dy \\
&= \sum_{m=1}^{\infty} \int_0^{\infty} y^{s + \frac{3}{2} -2} \ell(m) e^{- 2 \pi m^2 y}  \overline{c(m^2-h, y)} e^{- 2 \pi h y} dy \\
&=  \sum_{m > \sqrt{h}} ( \cdots)  +\sum_{m = \sqrt{h}} ( \cdots)  +\sum_{0 < m < \sqrt{h}} ( \cdots)  \\
&\equalscolon I_h^+(s) + I_h^0(s) + I_h^- (s).
\end{align*}
Here and throughout, $<,>$ denotes the Petersson inner product. 
The discussion on page 18 of \cite{Walker} shows that $I_h(s)$ has a meromorphic continuation to $s \in \mathbb{C}$, which is analytic for $\operatorname{Re}(s) > \frac{1}{2} + \theta$. 

For the holomorphic part, we have:
\begin{align*}
 I_h^+(s) &= \sum_{m > \sqrt{h}} \int_0^{\infty} y^{s - \frac{1}{2}} \ell(m) e^{- 2 \pi m^2 y}  \overline{c(m^2-h, y)} e^{- 2 \pi h y} dy \\ 
 &= \sum_{m > \sqrt{h}} \ell(m)   \overline{c^+(m^2-h)} \int_0^{\infty} y^{s -\frac{1}{2}}  e^{- 2 \pi (m^2-h) y} e^{- 2 \pi m^2 y} e^{- 2 \pi h y} dy \\ 
 &=  (4 \pi)^{-(s +\frac{1}{2})} \Gamma\left(s + \frac{1}{2}\right) D_h(s) \\ 
\end{align*}

For the constant term, we compute
\begin{align*}
I_h^0(s) &=  \delta_{\square}(h) \int_0^{\infty} y^{s - \frac{1}{2}} \ell(\sqrt{h}) e^{- 2 \pi h y} \overline{ c(0, y)} e^{- 2 \pi h y} dy \\ 
&=  \delta_{\square}(h) \ell(\sqrt{h}) \overline{c^+(0)} (4 \pi h)^{-(s + \frac{1}{2})} \Gamma \left(s + \frac{1}{2} \right) +  \delta_{\square}(h) \ell(\sqrt{h}) \overline{c^-(0)} (4 \pi h)^{-s} \Gamma(s)   \\ 
\end{align*}
Thus, $I_h^0(s)$ is a meromorphic function on $\mathbb{C}$ with poles contained within $\frac{1}{2} \mathbb{Z}_{\le 0}$.

For the nonholomorphic part, we compute (using \cite{GradshteynRyzhik} 6.455 in the last step)
\begin{align*}
 I_h^- (s) &=  \!\!\!\!\sum_{0<m < \sqrt{h}}  \int_0^{\infty} y^{s +\frac{3}{2} - 2} \ell(m) \overline{c^- (m^2-h)} \Gamma(1-3/2, |4 \pi (m^2-h) y| ) e^{-4 \pi m^2y}dy \\
    &=  \sum_{0<m < \sqrt{h}}\ell(m) \overline{c^- (m^2-h)} \int_0^{\infty} y^{s -\frac{1}{2}}  e^{- 4 \pi m^2 y}   \Gamma(-1/2, |4 \pi (m^2-h) y| )  dy \\
    &=  \sum_{0<m < \sqrt{h}} \ell(m) \overline{c^- (m^2-h)}  \frac{ |4 \pi (m^2-h)|^{-\frac{1}{2}} \Gamma(s)}{(s + \frac{1}{2}) (4 \pi \ell)^s} {}_2 F_1 \left(1, s, s+1; \frac{4 \pi m^2}{4 \pi h} \right).  \\
 \end{align*}
Thus, $I_h^-(s)$ is a meromorphic function on $\mathbb{C}$ with poles contained within $\{- \frac{1}{2} \} \cup \mathbb{Z}_{\le 0}$.

We deduce from the analyticity of $I_h(s)$ on $\operatorname{Re}(s) > \frac{1}{2}+\theta$ that $D_h(s)$ is also analytic on this half-plane.
\end{proof}

The next result is a modification of Theorem 9.1 of \cite{Walker}. 

\begin{Prop}\label{thm:Theorem9.1}
Fix $0< \epsilon < \theta$ and $h > 0$. In the vertical strip $\operatorname{Re}(s) \in \left(\frac{1}{2} + \epsilon, \frac{3}{2} + \epsilon\right)$ away from poles of $D_h(s)$, we have
$$
D_h(s) \ll_{\epsilon} |s|^{\epsilon + \frac{5}{2}\left(\frac{3}{2} - \operatorname{Re}(s)\right)}
$$
\end{Prop}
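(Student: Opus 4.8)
The plan is to reduce the bound on $D_h(s)$ to a growth estimate for the Poincar\'e--series inner product $I_h(s)$ and then finish by interpolation. From the proof of \Cref{thm:ShiftedDirichletSeries} we have the identity $I_h^+(s) = (4\pi)^{-(s+1/2)}\Gamma\!\left(s+\tfrac12\right)D_h(s)$, so that
\[
D_h(s) = (4\pi)^{s+\frac12}\,\Gamma\!\left(s+\tfrac12\right)^{-1}\bigl(I_h(s)-I_h^0(s)-I_h^-(s)\bigr).
\]
Since $(4\pi)^{s+1/2}$ is bounded on vertical lines, the task is to bound each of the three pieces and then divide by the explicit Gamma factor, whose size on vertical lines is governed by Stirling's formula: $\lvert\Gamma(s+\tfrac12)^{-1}\rvert \asymp |t|^{-\operatorname{Re}(s)}e^{\pi|t|/2}$ as $|t|=|\operatorname{Im}(s)|\to\infty$. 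Thus the exponential growth of $\Gamma(s+\tfrac12)^{-1}$ must be cancelled by a matching exponential decay of $I_h(s)$, and the final polynomial exponent will be the difference of the polynomial degrees. I would organize the estimate as bounds on the two edges $\operatorname{Re}(s)=\tfrac12+\epsilon$ and $\operatorname{Re}(s)=\tfrac32+\epsilon$, followed by the Phragm\'en--Lindel\"of principle.

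On the right edge $\operatorname{Re}(s)=\tfrac32+\epsilon$ the defining series converges absolutely: the hypothesis $\overline{c^+(m^2-h)}\ell(m)=O(m^{2+\epsilon})$ makes the $m$-th term $O(m^{1+\epsilon-2\operatorname{Re}(s)})=O(m^{-2-\epsilon})$, so $D_h(s)\ll_\epsilon 1$, which is the required $|s|^\epsilon$ there. The two explicit pieces $I_h^0(s)$ and $I_h^-(s)$ are holomorphic in the strip (their poles, computed in the proof of \Cref{thm:ShiftedDirichletSeries}, lie in $\tfrac12\mathbb{Z}_{\le 0}$ and $\{-\tfrac12\}\cup\mathbb{Z}_{\le 0}$) and are easily controlled: $I_h^0(s)$ is a combination of $\Gamma(s+\tfrac12)$ and $\Gamma(s)$ times powers, while $I_h^-(s)$ is a finite sum of $\Gamma(s)$ times a ${}_2F_1(1,s,s+1;\,\cdot\,)$ with argument in $(0,1)$, which is bounded uniformly in $s$ (via ${}_2F_1(1,s,s+1;z)=s\int_0^1 t^{s-1}(1-zt)^{-1}dt$). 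After division by $\Gamma(s+\tfrac12)$ the exponential factors cancel and these terms contribute strictly below the main exponent. The decisive input is the growth of $I_h(s)$ itself, which I would obtain from the spectral decomposition of the Maass--Poincar\'e series $P_h(\tau,s)$ exactly as in Walker's Theorem 9.1: expanding $P_h$ over the discrete and continuous spectrum, each term carries the shifted--convolution data of $y^{3/2}g\overline f$ against a Hecke--Maass form together with an archimedean factor that is a ratio of Gamma functions. Stirling on these archimedean factors produces the decay $e^{-\pi|t|/2}$, bounds on the Maass coefficients (governed by $\theta$) together with Weyl's law ensure the spectral sum converges, and the net outcome is a bound $I_h(s)\ll |s|^{A(\operatorname{Re}(s))}e^{-\pi|t|/2}$ with $A$ linear in $\operatorname{Re}(s)$. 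Multiplying by $\Gamma(s+\tfrac12)^{-1}$ cancels the exponential and yields $D_h(s)\ll_\epsilon|s|^{5/2+\epsilon}$ on the left edge and $\ll_\epsilon|s|^\epsilon$ on the right edge.

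Finally I would invoke Phragm\'en--Lindel\"of in the strip $\tfrac12+\epsilon\le\operatorname{Re}(s)\le\tfrac32+\epsilon$. Since $\epsilon<\theta$, \Cref{thm:ShiftedDirichletSeries} gives holomorphy for $\operatorname{Re}(s)>\tfrac12+\theta$, and the only possible poles of $D_h$ in the remaining sliver $\tfrac12+\epsilon<\operatorname{Re}(s)\le\tfrac12+\theta$ come from $I_h$ and lie in a bounded region (from exceptional eigenvalues and the continuous spectrum); hence for $|\operatorname{Im}(s)|$ large $D_h$ is holomorphic throughout the strip, while for $|\operatorname{Im}(s)|$ bounded and away from poles the claimed bound is trivial by continuity. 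The representation $D_h=(4\pi)^{s+1/2}\Gamma(s+\tfrac12)^{-1}I_h^+$ shows $D_h$ is of finite order, so Phragm\'en--Lindel\"of applies and interpolates the edge exponents $\tfrac52+\epsilon$ (at $\operatorname{Re}(s)=\tfrac12+\epsilon$) and $\epsilon$ (at $\operatorname{Re}(s)=\tfrac32+\epsilon$) linearly, giving exactly $\epsilon+\tfrac52\bigl(\tfrac32-\operatorname{Re}(s)\bigr)$ after an adjustment of $\epsilon$. The main obstacle is the left--edge estimate for $I_h(s)$: establishing the spectral bound with the correct polynomial degree and verifying that its exponential decay exactly matches the growth of $\Gamma(s+\tfrac12)^{-1}$, which is the content of Walker's Theorem 9.1 adapted to the present weight-$\tfrac32$ data and to the nonholomorphic part of $f$. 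Managing the poles in the sliver $\tfrac12+\epsilon<\operatorname{Re}(s)\le\tfrac12+\theta$ so that Phragm\'en--Lindel\"of still applies is the other point requiring care.
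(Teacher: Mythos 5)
Your proposal is correct and follows essentially the same route as the paper: the paper's entire proof is the one-line observation that Walker's Theorem 9.1 applies verbatim once $y^{3/2}|H(\tau)|^2$ is replaced by $y^{3/2}g(\tau)\overline{f}(\tau)$, and what you have written is precisely an unpacking of that argument --- the reduction of $D_h$ to the Poincar\'e inner product $I_h$, the explicit control of $I_h^0$ and $I_h^-$, the spectral bound on $I_h$ with Stirling cancelling the growth of $\Gamma\left(s+\tfrac12\right)^{-1}$, and Phragm\'en--Lindel\"of between the two edges. The only slips are cosmetic: ${}_2F_1(1,s;s+1;z)$ is $O(|s|)$ rather than $O(1)$ on vertical lines, and at $\operatorname{Re}(s)=\tfrac32+\epsilon$ the stated exponent is actually negative so $O(1)$ does not literally match it, but both issues are absorbed by the usual adjustment of $\epsilon$ and do not affect the conclusion.
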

\begin{proof}
In the proof in \cite{Walker} of Theorem 9.1, we replace $F(z) = y^{\frac{3}{2}} | H(\tau)|^2$ with $F(\tau) = y^{\frac{3}{2}} g(\tau) \overline{f}(\tau)$, and the proof requires no adjustments. 
\end{proof}

\begin{Prop}\label{thm:ShiftedConvBound}
With the notation and assumptions on $f_1, f_2$ given at the beginning of the section, for any integer $h > 0$, there exists $\delta>0$ such that
$$
\sum_{m \ll X} \overline{c^+(m^2 -h)} \ell(m)
    \ll_{h} X^{\frac{3}{2} - \delta}.
$$
as $X \to \infty$. 
\end{Prop}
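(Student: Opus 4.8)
The plan is to recover the partial sum from the shifted convolution Dirichlet series $D_h(s)$ by a Mellin/Perron contour integral and then to exploit the analytic continuation and growth bounds of Propositions~\ref{thm:ShiftedDirichletSeries} and~\ref{thm:Theorem9.1}. Set $a(m) \colonequals \overline{c^+(m^2-h)}\,\ell(m)$, so that $D_h(s) = \sum_{m^2>h} a(m)\,m^{-(2s+1)}$; under the substitution $w = 2s+1$ this becomes an ordinary Dirichlet series $\sum_m a(m)\,m^{-w}$, which by the standing hypothesis $a(m) = O(m^{2+\epsilon})$ converges absolutely for $\operatorname{Re}(w) > 3$. To avoid the diagonal losses of sharp Perron truncation, I would first treat a smoothed sum $\sum_m a(m)\,V(m/X)$ for a fixed smooth $V$ approximating $\mathbf 1_{[0,1]}$, recovering the sharp sum at the end by sandwiching the indicator between smooth weights. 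Mellin inversion gives
$$
\sum_m a(m)\,V\!\left(\tfrac{m}{X}\right) = \frac{1}{2\pi i}\int_{(c)} \widetilde V(w)\,X^{w}\,D_h\!\left(\tfrac{w-1}{2}\right)\,dw, \qquad c > 3,
$$
where $\widetilde V$ decays rapidly on vertical lines, so the integral converges absolutely.

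Next I would move the line of integration to the left. Proposition~\ref{thm:ShiftedDirichletSeries} shows $D_h(s)$ is holomorphic for $\operatorname{Re}(s) > \tfrac12 + \theta$, i.e.\ $D_h(\tfrac{w-1}{2})$ is holomorphic for $\operatorname{Re}(w) > 2 + 2\theta$, and Proposition~\ref{thm:Theorem9.1} supplies the polynomial control $D_h(s) \ll |s|^{\epsilon + \frac52(\frac32 - \operatorname{Re}(s))}$ in the relevant strip, which legitimizes the shift and bounds the vertical integral on each intermediate line. Pushing the contour to $\operatorname{Re}(w) = 2 + 2\theta + \epsilon$ and playing the rapid decay of $\widetilde V$ against this polynomial growth already yields a saving over the trivial bound $X^{3+\epsilon}$. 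To reach the sharper exponent $\tfrac32 - \delta$, I would continue the contour into the critical strip $\operatorname{Re}(s) \le \tfrac12 + \theta$, picking up the residues at the finitely many spectral poles of $D_h$ there (those produced by exceptional eigenvalues of the weight-$0$ Laplacian, whose real parts are pinned to $[\tfrac12, \tfrac12 + \theta]$ by $\theta \le \tfrac{7}{64}$) and on the continuous spectrum at $\operatorname{Re}(s) = \tfrac12$, then balance the size of these residues against the integral on the final line. This is precisely the place where Walker's spectral analysis, rather than the black-boxed analyticity of Proposition~\ref{thm:ShiftedDirichletSeries}, does the real work, and where the value of $\theta$ fixes the admissible $\delta$.

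The hard part will be this contour analysis past the edge $\operatorname{Re}(s) = \tfrac12 + \theta$ of guaranteed holomorphy: one must locate the spectral poles, show their residues contribute at most $O(X^{3/2-\delta})$, and check that the growth estimate of Proposition~\ref{thm:Theorem9.1} stays strong enough on the shifted line to beat $X^{3/2}$ once the Mellin factor $\widetilde V$ is accounted for. A cleaner route I would try first, to see how far it gets without crossing poles, is to read the partial sum off the square-indexed series $\sum_n \widetilde a(n)\,n^{-s}$ with $\widetilde a(m^2) = a(m)/m$, bound $\sum_{n \le Y}\widetilde a(n)$ by shifting to $\operatorname{Re}(s) = \tfrac12 + \theta + \epsilon$, and convert back by partial summation; I expect this to give a genuinely weaker exponent than $\tfrac32 - \delta$ (the weight $m$ in $a(m) = m\,\widetilde a(m^2)$ costs a full power of $X$ in the Abel summation), which is exactly why the sharper claim forces the pole-crossing argument above and the full strength of the methods imported from \cite{Walker}.
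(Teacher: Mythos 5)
Your skeleton---Perron/Mellin inversion, a contour shift, and the analytic inputs of Propositions \ref{thm:ShiftedDirichletSeries} and \ref{thm:Theorem9.1}---is exactly the paper's, but you then talk yourself out of it because of a normalization error, and the step you substitute in its place is a genuine gap. The sum in the proposition runs over $m$ with $m^2 \ll X$, not $m \le X$: this is forced by the paper's choice of Perron line at $\operatorname{Re}(s) = \tfrac32 + \epsilon$ (note $D_h\bigl(s-\tfrac12\bigr) = \sum_m a(m)\,(m^2)^{-s}$ with $a(m) = \overline{c^+(m^2-h)}\,\ell(m)$ is a Dirichlet series in $n = m^2$ with abscissa $\tfrac32$, not $3$), and by \Cref{fig:shifted-convergence}, where $X = (2x+1)^2$ is the \emph{square} of the $m$-range. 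Hence the trivial bound is $X^{3/2+\epsilon}$, and the target $X^{3/2-\delta}$ is only a power saving over trivial. Consequently your first contour shift---to a vertical line on which the argument of $D_h$ has real part just above $\tfrac12+\theta$, i.e.\ strictly inside the region of holomorphy guaranteed by Proposition \ref{thm:ShiftedDirichletSeries}---combined with the \emph{truncated} Perron formula with $T = X^b$ and an optimization of the three error terms (vertical line $\ll X^a T^{\epsilon+\frac52(\frac32-a)}$ via Proposition \ref{thm:Theorem9.1}, horizontal segments, truncation error $X^{3/2+\epsilon}T^{-1/2}$) already produces $X^{3/2-\delta}$. That is the entirety of the paper's proof: no pole is ever crossed, no residue is computed, and no spectral information beyond the two black-boxed propositions is used. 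The step you designate as ``where the real work happens''---continuing past $\operatorname{Re}(s)=\tfrac12+\theta$, locating exceptional-eigenvalue poles, bounding their residues, and crossing the continuous spectrum at $\operatorname{Re}(s)=\tfrac12$---is both unnecessary and unsupported by your cited tools: Proposition \ref{thm:ShiftedDirichletSeries} supplies no meromorphic data (pole locations or residues) left of $\tfrac12+\theta$, Proposition \ref{thm:Theorem9.1} is only stated in the strip $\operatorname{Re}(s) \in (\tfrac12+\epsilon,\tfrac32+\epsilon)$ away from poles, and the continuous-spectrum contribution is not a discrete set of poles one simply ``picks up.'' As written, the claimed crux of your argument is missing, even though the correct proof is a strict subset of what you proposed.

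Two smaller points. First, your fallback route---bounding $\sum_{n\le Y}\tilde a(n)$ for $\tilde a(m^2)=a(m)/m$ on the line $\operatorname{Re}(s)=\tfrac12+\theta+\epsilon$ and reinstating the weight by partial summation---is not merely weaker, it is worse than trivial: the growth $|s|^{\epsilon+\frac52(\frac32-\sigma)}$ at $\sigma = \tfrac12+\theta+\epsilon$ forces the truncation optimization to stop near $Y^{1.22}$ for the unweighted sum, and partial summation against the weight $\sqrt{n}\le\sqrt{X}$ then lands above $X^{3/2}$. The paper sidesteps this entirely by running Perron on $D_h\bigl(s-\tfrac12\bigr)$ itself, so the weighted sum $\sum_{m^2\le X} a(m)$ is recovered directly with no Abel-summation loss; the $\tfrac12$-shift in the argument of $D_h$ is doing real work. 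Second, your smoothing step needs more care than stated: the coefficients $a(m)$ oscillate in sign (e.g.\ through $\chi(m)$), so sandwiching $\mathbf{1}_{[0,1]}$ between smooth majorants and minorants does not directly bound the sharp sum; one must instead estimate the transition-range discrepancy trivially, or---as the paper does---use the truncated Perron formula and avoid smoothing altogether.
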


\begin{remark}
In our proof, $\delta$ depends on $\theta$, as well as $f,g, h$.
If we set $\theta =\frac{7}{64}$, we are able to set $\delta < \frac{57}{349} = 0.1633 \cdots$ According to the Ramanujan-Petersson Conjecture, we can set $\theta = 0$, producing $\delta < \frac{1}{6}$ and
$$
\sum_{m \ll X} \overline{c^+(m^2 -h)} \ell(m) \ll X^{\frac{4}{3} + \epsilon}
$$
This is supported by numerical evidence---see \Cref{fig:shifted-convergence} below for data in the case $f = \mathcal{H}(\tau)$ and $g(\tau) = \theta_{\chi_4}$. It would be interesting to know the optimal bound of the form $X^{c + \epsilon}$. Numerically, it appears that $c$ lies in the interval $[1, \frac{5}{4})$.
\end{remark}

\begin{figure}[!ht]
\begin{center}
    \includegraphics[scale=0.45]{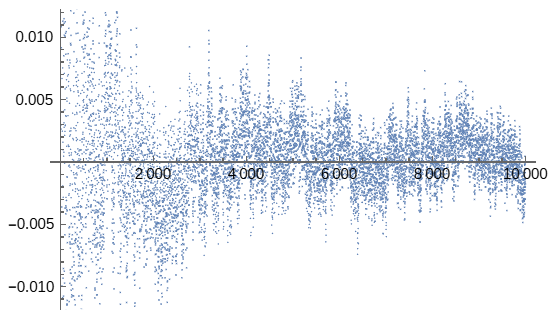}
\end{center}
\caption{Values of $\sum_{m \ll X} H(m^2-14) m\chi_4(m)/X^{5/4}$, where each $x$-value in the plot corresponds to $X = (2x+1)^2$.  Note that this suggests a stronger bound than the $4/3+\epsilon$ given above.}
\label{fig:shifted-convergence}
\end{figure}

\begin{proof}

Applying the truncated Perron's formula, for $\epsilon>0$ we find
\begin{equation}\label{eq:Perron}
  \left| \sum_{m \ll X} c^+(m^2 -h) \ell(m)\right|  = \frac{1}{2 \pi i } \int_{\frac{3}{2} + \epsilon - iT}^{\frac{3}{2} + \epsilon + iT} D_{\ell} \left(s - \left(
  \frac{3}{2}-1 \right) \right) \frac{X^s}{s} ds + O \left( \frac{X^{\frac{3}{2} + \epsilon}}{\sqrt{T}} \right).
\end{equation}
 For $a > \frac{1}{2} + \theta$, the Residue Theorem gives us
 \begin{equation}\label{eq:residue}
  \int_{\frac{3}{2} + \epsilon - iT}^{\frac{3}{2} + \epsilon + iT} D_{\ell} \left(s -
  \frac{1}{2} \right) \frac{X^s}{s} ds  =  \left( \int_{a - iT}^{a + iT} +  \int_{a +  iT}^{\frac{3}{2} + \epsilon + iT} - \int_{a - iT}^{\frac{3}{2} + \epsilon - iT} \right) D_{\ell} \left(s -\frac{1}{2} \right) \frac{X^s}{s} ds 
 \end{equation}
  
We bound the integral along the vertical contour using Proposition \ref{thm:Theorem9.1}:
\begin{equation}\label{eq:vertestimate}
2 \int_0^T t^{\epsilon} t^{\frac{5}{2}\cdot (\frac{3}{2} - a)} \frac{X^a}{(a + t)} dt \ll X^a T^{\epsilon + \frac{5}{2}(\frac{3}{2} - a)}
\end{equation}

We use Proposition \ref{thm:Theorem9.1} for the horizontal contours as well:
\begin{align}\label{eq:horestimate}
\int_a^{\frac{3}{2}+ \epsilon} |r + iT|^{\epsilon} |r + iT|^{\frac{5}{2} ( \frac{3}{2} - r)} \frac{X^r}{|r + iT|} dr &\ll  T^{\frac{15}{4}-1 + \epsilon} \int_a^{\frac{3}{2}+ \epsilon}  \left(\frac{X}{T^{\frac{5}{2}}} \right)^r dr \nonumber \\ 
&\ll  T^{\frac{15}{4}-1 + \epsilon} \frac{1}{\log \left(\frac{X}{T^{\frac{5}{2}}} \right) } \left(\frac{X}{T^{\frac{5}{2}}} \right)^r \biggr\vert_{r=a}^{r=\frac{3}{2} + \epsilon} \nonumber \\ 
&\ll \frac{1}{\log \left(\frac{X}{T^{\frac{5}{2}}} \right)}  \begin{cases} 
X ^{ \frac{3}{2} + \epsilon} T^{- 1 - \frac{3}{2} \epsilon} &  T^{\frac{5}{2}} \ll X \\
X^a T^{\frac{1}{4} a + \epsilon} &  T^{\frac{5}{2}} \gg X \\
\end{cases}
\end{align}
Let $T = X^b$. Then \eqref{eq:Perron}, \eqref{eq:residue},\eqref{eq:vertestimate}, and \eqref{eq:horestimate} give the result:
\begin{align}\label{eq:Prop3.3Final}
&\left| \sum_{m \ll X} c^+(m^2 -h) \ell(m) \right|  = \frac{1}{2 \pi i } \int_{\frac{3}{2} + \epsilon - iT}^{\frac{3}{2} + \epsilon + iT} D_{\ell} \left(s - \left(
  \frac{3}{2}-1 \right) \right) \frac{X^s}{s} ds + O \left( \frac{X^{\frac{3}{2} + \epsilon}}{\sqrt{T}} \right) \nonumber \\
    &\ll  X^{a + b( \epsilon + \frac{5}{2}(\frac{3}{2} - a))} + X^{ \frac{3}{2} + \epsilon - b(1 + \frac{3}{2} \epsilon)} +O \left(X^{\frac{3}{2} + \epsilon - \frac{b}{2}} \right).
  \end{align}
  If $\epsilon$ is sufficiently small, we can choose $b$ such that
  $$
2\epsilon < b < \frac{\frac{3}{2} - a}{\epsilon + \frac{5}{2}(\frac{3}{2}-a)}  
  $$
  and all three exponents in \eqref{eq:Prop3.3Final}
  are strictly smaller than $\frac{3}{2} - \delta$ for some positive $\delta$.

\end{proof}

%%%%%%%%
\section{Holomorphic projection calculation}\label{sec:proofs}
\subsection{Lemmas}

We record a few integral formulas that we refer to in the proof of \Cref{thm:proj-computation}.

\begin{lemma}\label{thm:d1integral}
Let $a>0$, then we have the formulas
\begin{equation}\label{eq:d1integral}
 \int_0^{\infty} e^{- 4 \pi a y}\log {y} dy = - (4 \pi a)^{-1} (\gamma) + \log (4 \pi a))
\end{equation}
and 
\begin{equation}\label{eq:d2integral}
  \int_0^{\infty} y^{1/2} e^{-4 \pi  a y} dy = \frac{1}{16 \pi a^{3/2}}
  \end{equation}
and
\begin{equation}\label{eq:d3integral}
 \int_0^{\infty} e^{- 4\pi ay} y^{1/2} \log {y} dy = -(16\pi a^{3/2})^{-1} \left(-2 + \gamma + \log(16\pi a)\right).
\end{equation}
\end{lemma}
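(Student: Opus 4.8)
The plan is to derive all three formulas from the single Euler integral
\[
\int_0^\infty y^{s-1} e^{-by}\,dy = \Gamma(s)\, b^{-s}, \qquad \operatorname{Re}(s) > 0,\ b > 0,
\]
specialized to $b = 4\pi a$. With this in hand, \eqref{eq:d2integral} is immediate: taking $s = \tfrac{3}{2}$ gives $\int_0^\infty y^{1/2} e^{-4\pi a y}\,dy = \Gamma(\tfrac{3}{2})(4\pi a)^{-3/2}$, and I would simplify using $\Gamma(\tfrac{3}{2}) = \tfrac{\sqrt{\pi}}{2}$ and $(4\pi a)^{3/2} = 8(\pi a)^{3/2}$ to obtain $\tfrac{1}{16\pi a^{3/2}}$.

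For \eqref{eq:d1integral} and \eqref{eq:d3integral}, which carry the extra factor $\log y$, the idea is to differentiate the Euler integral in the parameter $s$. Since $\partial_s y^{s-1} = y^{s-1}\log y$, differentiating under the integral sign (justified by absolute and locally uniform convergence for $\operatorname{Re}(s) > 0$) yields
\[
\int_0^\infty y^{s-1}(\log y)\, e^{-by}\,dy = \frac{d}{ds}\bigl(\Gamma(s) b^{-s}\bigr) = \Gamma'(s)\,b^{-s} - \Gamma(s)\,b^{-s}\log b.
\]
Writing $\Gamma'(s) = \Gamma(s)\psi(s)$ with $\psi$ the digamma function, I would evaluate at $s = 1$ for \eqref{eq:d1integral} and at $s = \tfrac{3}{2}$ for \eqref{eq:d3integral}, using the standard values $\psi(1) = -\gamma$ and $\psi(\tfrac{3}{2}) = 2 - \gamma - 2\log 2$ (the latter from $\psi(\tfrac{3}{2}) = \psi(\tfrac{1}{2}) + 2$ together with $\psi(\tfrac{1}{2}) = -\gamma - 2\log 2$). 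At $s=1$ this immediately gives $-b^{-1}(\gamma + \log b) = -(4\pi a)^{-1}(\gamma + \log(4\pi a))$, matching \eqref{eq:d1integral}.

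The only genuine bookkeeping step, and the one I would be most careful with, is the simplification for \eqref{eq:d3integral}. At $s = \tfrac{3}{2}$ the formula produces $\tfrac{1}{16\pi a^{3/2}}\bigl(2 - \gamma - 2\log 2 - \log(4\pi a)\bigr)$, where the shared prefactor $\Gamma(\tfrac{3}{2})(4\pi a)^{-3/2} = \tfrac{1}{16\pi a^{3/2}}$ is exactly the one from \eqref{eq:d2integral}. The point is then to collect the logarithms: since $2\log 2 + \log(4\pi a) = \log 16 + \log(\pi a) = \log(16\pi a)$, the bracket becomes $2 - \gamma - \log(16\pi a) = -\bigl(-2 + \gamma + \log(16\pi a)\bigr)$, which recovers the stated right-hand side. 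There is no real analytic obstacle here; the entire lemma is a careful application of differentiation under the integral sign together with the special values of $\Gamma$ and $\psi$ at $1$ and $\tfrac{3}{2}$, so all of the work lies in the constant and logarithm bookkeeping.
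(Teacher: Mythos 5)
Your proof is correct and follows essentially the same route as the paper: evaluate the Euler integral $\Gamma(s)b^{-s}$ with $b=4\pi a$ and differentiate in $s$ (the paper arrives at the same expression $(4\pi a)^{-3/2}(\Gamma'(3/2)-\log(4\pi a)\Gamma(3/2))$ after a $u$-substitution, and handles \eqref{eq:d1integral} by integration by parts and a DLMF reference instead, but the content is identical). One small point in your favor: your value $\psi(3/2)=2-\gamma-2\log 2$ is the correct one and makes the bookkeeping for \eqref{eq:d3integral} come out to the stated right-hand side, whereas the paper's proof quotes $\psi(3/2)=\gamma-2\log 2+2$, which is a sign typo.
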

\begin{proof}
The first equation is a straightforward application of integration by parts and (6.6.2) of \cite{NIST:DLMF}. The second equation follows from $\Gamma(3/2) = \sqrt{\pi}/2$ and a $u$-substitution.

After a $u$-substitution, the integral in the third equation can be decomposed as
$$
    \int_0^{\infty} e^{- 4\pi ay} y^{1/2} \log {y} dy = (4 \pi a)^{- \frac{3}{2}} \left( \int_0^{\infty} e^{- y} y^{1/2} \log {y} dy - \log (4 \pi a) \int_0^{\infty} e^{-y} y^{1/2} dy \right) 
    $$ 
Both integrals are expressible via the $\Gamma$ function.  In particular,
$$
\int_0^{\infty} e^{- 4\pi ay} y^{1/2} \log {y} dy= (4 \pi a)^{- \frac{3}{2}} \left( \Gamma'(3/2) - \log (4 \pi a) \Gamma(3/2) \right)  
$$
Let $\psi$ be the digamma function. We rewrite the previous expression as
$$
\int_0^{\infty} e^{- 4\pi ay} y^{1/2} \log {y} dy
     = (4 \pi a)^{- \frac{3}{2}} \Gamma(3/2) \left(\psi(3/2) - \log (4 \pi a)  \right) 
     $$
Finally, we use (5.4.15) of \cite{NIST:DLMF} to evaluate the digamma value:
$$
\psi \left( \frac{3}{2} \right) = \gamma - 2 \log 2 + 2.
$$
\end{proof}
For $m,n>0$, set
\begin{equation}\label{eq:alpha-defn}
    \alpha_{n,m} \colonequals 4\pi (n+m^2) \int_0^{\infty}\alpha(4ny)e^{-4\pi (n+m^2)y}dy.
\end{equation}
\begin{lemma}\label{thm:alphaintegral}
If $n,m>0$, then
\[
    \alpha_{n,m} = \frac{1}{4 \pi m} \left( 2 \sqrt{n} \arctan\left( \frac{m}{\sqrt{n}}\right) - m \log \left(\frac{4n}{n+m^2}\right) \right).
\]
\end{lemma}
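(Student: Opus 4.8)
The plan is to substitute the integral definition of $\alpha$ from \eqref{eq:alphadef} into \eqref{eq:alpha-defn}, interchange the order of integration, evaluate the resulting elementary $y$-integral, and then reduce the whole quantity to a single one-dimensional integral which can be evaluated by integration by parts together with a standard table integral.

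First I would write $\alpha(4ny) = \frac{\sqrt{ny}}{2\pi}\int_0^\infty t^{-1/2}e^{-4\pi nyt}\log(1+t)\,dt$ and insert this into \eqref{eq:alpha-defn}. Since the double integrand is absolutely integrable, Fubini's theorem permits interchanging the $t$- and $y$-integrals. The inner integral in $y$ is then $\int_0^\infty y^{1/2}e^{-4\pi(nt+n+m^2)y}\,dy = \frac{\sqrt{\pi}}{2}\big(4\pi(nt+n+m^2)\big)^{-3/2}$ by the formula $\int_0^\infty y^{1/2}e^{-cy}\,dy = \Gamma(3/2)c^{-3/2}$ used already in \Cref{thm:d1integral}. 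Collecting the constants leaves
\[
\alpha_{n,m} = \frac{(n+m^2)\sqrt{n}}{8\pi}\int_0^\infty t^{-1/2}\log(1+t)\,(nt+n+m^2)^{-3/2}\,dt.
\]

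Next I substitute $t = u^2$ and factor $n^{-3/2}$ out of the algebraic term, writing $b^2 \colonequals (n+m^2)/n$, which reduces the problem to computing the single integral $I \colonequals \int_0^\infty \log(1+u^2)(u^2+b^2)^{-3/2}\,du$, with $\alpha_{n,m} = \frac{n+m^2}{4\pi n}\,I$. To evaluate $I$ I would integrate by parts with $dg = (u^2+b^2)^{-3/2}\,du$, choosing the antiderivative $g = \frac{u}{b^2\sqrt{u^2+b^2}} - \frac{1}{b^2}$ that vanishes as $u \to \infty$, so that both boundary terms drop out. The remaining integrand is elementary; combining its two pieces over a common denominator and letting a cutoff $R \to \infty$ produces the two expected contributions: a logarithmic term $\log(b/2)$ arising from the cancellation of the divergent logarithms $\tfrac12\log(1+R^2)$ and $\log\frac{R+\sqrt{R^2+b^2}}{b}$, and an arctangent term coming from the table integral $\int_0^\infty \frac{du}{(u^2+1)\sqrt{u^2+b^2}} = \frac{1}{\sqrt{b^2-1}}\arctan\sqrt{b^2-1}$, valid since $b>1$ (see \cite{GradshteynRyzhik}). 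This gives $I = \frac{2}{b^2}\big(\log(b/2) + \frac{1}{\sqrt{b^2-1}}\arctan\sqrt{b^2-1}\big)$.

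Finally, back-substituting $b^2 = (n+m^2)/n$, so that $\sqrt{b^2-1} = m/\sqrt{n}$, $\arctan\sqrt{b^2-1} = \arctan(m/\sqrt{n})$, and $\log(b/2) = -\tfrac12\log\frac{4n}{n+m^2}$, the prefactor simplifies as $\frac{n+m^2}{4\pi n}\cdot\frac{2n}{n+m^2} = \frac{1}{2\pi}$, yielding $\alpha_{n,m} = \frac{1}{2\pi}\big(\frac{\sqrt{n}}{m}\arctan\frac{m}{\sqrt{n}} - \frac12\log\frac{4n}{n+m^2}\big)$, which rearranges to the claimed expression after pulling out $\frac{1}{4\pi m}$. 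The main obstacle is the evaluation of $I$: the two natural pieces of the integrand each diverge logarithmically, so one must regularize carefully (via the cutoff $R$, equivalently the subtracted antiderivative $g$) to ensure the divergences cancel into the correct finite logarithm, and correctly identify the standard arctangent integral.
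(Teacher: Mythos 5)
Your proposal is correct and follows the same overall strategy as the paper's proof: insert the integral representation \eqref{eq:alphadef}, apply Fubini (Tonelli, since the integrand is positive), evaluate the inner $y$-integral via $\Gamma(3/2)$, and reduce to the one-dimensional integral $\frac{N\sqrt{n}}{8\pi}\int_0^\infty t^{-1/2}\log(1+t)(nt+N)^{-3/2}\,dt$ with $N=n+m^2$, exactly as in \eqref{eq:alpha-integral}. The only divergence from the paper is in the final calculus step: the paper substitutes $t\mapsto 1/u$ and splits $\log(1+t)=\log(1+u)-\log u$ into two individually convergent integrals $I_2-I_1$, evaluating $I_1$ via a Gradshteyn--Ryzhik table entry involving the digamma function and $I_2$ by parts; you instead substitute $t=u^2$ and perform a single integration by parts with the shifted antiderivative $g=\frac{u}{b^2\sqrt{u^2+b^2}}-\frac{1}{b^2}$, which trades the digamma table integral for a cutoff regularization in which two logarithmically divergent pieces must be seen to cancel into $\log(b/2)$. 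Both routes land on the same arctangent table integral and the same closed form, and I verified your constants ($\alpha_{n,m}=\frac{N}{4\pi n}I$, $I=\frac{2}{b^2}(\log(b/2)+\frac{1}{\sqrt{b^2-1}}\arctan\sqrt{b^2-1})$, $\sqrt{b^2-1}=m/\sqrt{n}$) reproduce the stated formula exactly.
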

\begin{proof}
Applying Fubini's Theorem, we obtain
\begin{align*}
    \alpha_{n,m}     &= 4\pi N \int_0^{\infty}  \frac{\sqrt{4ny}}{4 \pi} \left(\int_0^{\infty} t^{-1/2} e^{- 4 \pi n y t} \log ( 1 + t) dt\right)  e^{- 4 \pi (n+m^2) y} dy  \\
    &= 2\sqrt{n}N \int_0^{\infty}   t^{-1/2} \log ( 1 + t) \left(\int_0^{\infty} \sqrt{y}  e^{- 4 \pi n y t}   e^{- 4 \pi (n+m^2) y} dy\right) dt 
\end{align*}
Let $N = n + m^2$. We use Lemma \ref{thm:d1integral} to evaluate the inner integral, and obtain
\begin{equation}\label{eq:alpha-integral}
\alpha_{n,m}= \frac{N}{8 \pi}  \int_0^{\infty}   t^{-1/2} \log ( 1 + t) ( t +(N/n))^{-3/2}  dt.
\end{equation}

It is convenient to set $a = N/n$ and decompose the integral as
\begin{align*}
    \int_0^{\infty} &t^{-1/2} (t+a)^{-3/2} \log (1+t) dt = \int_0^{\infty} t^{-2} (1 + (a/t))^{-3/2} \log(1+t) dt \\
    %&=  \textcolor{orange}{\int_0^{\infty} (1+au)^{-3/2} \log(1+(1/u)) du }\\
    &= \int_0^{\infty} (1+au)^{-3/2} \log (1+u) du 
    - \int_0^{\infty} (1 + au)^{-3/2} \log(u) du \\
    &=: I_2 - I_1
\end{align*}

The first integral can be evaluated using equation (6) on p. 544 of \cite{GradshteynRyzhik}, with $\mu = 1/2$.
\begin{align*}
    I_1 &=  a^{-3/2} \int_0^{\infty} ((1/a) + u)^{-3/2} \log(u) du \\
    &=  a^{-3/2} 2 a^{1/2} \left( \log(1/a) - \gamma-\psi\left(\frac{1}{2}\right) \right) \\
    &=  4 a^{-1} \log (2/\sqrt{a})
\end{align*}
where we have used 5.4.13 of \cite{NIST:DLMF} and again $\psi$ denotes the digamma function.

For the second integral, we use integration by parts. 
\begin{align*}
I_2 &= \left( \frac{-2}{a} \cdot \frac{ \log (1+u)}{(1+au)^{1/2}}     \right)\biggr\vert_0^{\infty} + \frac{2}{a} \int_0^{\infty} \frac{1}{1+u} \frac{1}{\sqrt{1+au}} du \\
%&= 0 + \frac{2}{a} \int_1^{\infty} \frac{1}{u} \frac{1}{\sqrt{ au + (1-a)}} du \\
%&=  \textcolor{orange}{\frac{2}{a^{3/2}} \int_1^{\infty} \frac{1}{u} \frac{1}{\sqrt{ u + ((1/a)-1)}} du} \\
%&= \textcolor{orange}{ \frac{2}{a^{3/2}} \int_0^{1} \frac{1}{t} \frac{1}{\sqrt{ (1/t) + ((1/a)-1)}} dt} \\
%&=  \frac{2}{a^{3/2}} \int_0^{1}  \frac{t^{-1/2}}{\sqrt{ 1 + t ((1/a)-1)}} dt \\
%&=  \frac{4}{a^{3/2}} \int_0^{1}  \frac{1}{\sqrt{ 1 -  v^2 (1 - (1/a))}} dv \\
&=  \frac{4 a^{1/2} }{a^{3/2} (a-1)^{1/2}}  \int_0^{1}  \frac{1}{\sqrt{ \frac{a}{a-1} -  v^2}} dv \\
%&=   \frac{4 a^{1/2} }{a^{3/2} (a-1)^{1/2}}  \arctan(\frac{u}{\sqrt{\frac{a}{a-1}}})|_0^1 \\
&=   \frac{4  }{a (a-1)^{1/2}}  \arctan \left(\sqrt{ a-1} \right) \\
\end{align*}
Plugging these expressions for $I_1$ and $I_2$ into \eqref{eq:alpha-integral} produces the result.
\end{proof}
%

%%%%%%%%%%%%%%%%%%%%%%%%%%%%

\begin{lemma}\label{thm:harmonic-int}
For $s \ge 0$, $N>0$, and $n > -N$, we have
$$
\int_0^{\infty} y^s \Gamma(1/2, 4 \pi ny) e^{-4 \pi Ny} dy =     \frac{\Gamma(3/2 + s) {}_2F_1  \left(1 + s, 3/2 + s; 2 + s, 
  -\frac{N}{n} \right) }{(1 + s)(4 \pi n)^{1+s}}
$$
\end{lemma}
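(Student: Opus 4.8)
The plan is to recognize the left-hand side as a standard incomplete-gamma transform and then apply a single hypergeometric transformation to move the argument into the stated shape. Set $\alpha = 4\pi n$, $\beta = 4\pi N$, $\mu = s+1$, and $\nu = \tfrac12$, so that the integral is $\int_0^\infty y^{\mu-1}e^{-\beta y}\Gamma(\nu,\alpha y)\,dy$. The convergence conditions for the table evaluation, namely $\operatorname{Re}(\alpha+\beta) = 4\pi(n+N) > 0$ and $\operatorname{Re}\mu = s+1 > 0$, are exactly guaranteed by the hypotheses $N>0$, $n>-N$, and $s\ge 0$; for $n<0$ one reads both $\Gamma(\tfrac12,\alpha y)$ and the resulting ${}_2F_1$ through analytic continuation in $n$, each side being analytic on $n>-N$.

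First I would invoke 6.455(2) of \cite{GradshteynRyzhik},
\[
\int_0^\infty y^{\mu-1}e^{-\beta y}\Gamma(\nu,\alpha y)\,dy = \frac{\alpha^{\nu}\,\Gamma(\mu+\nu)}{\mu\,(\alpha+\beta)^{\mu+\nu}}\;{}_2F_1\!\left(1,\mu+\nu;\mu+1;\frac{\beta}{\alpha+\beta}\right),
\]
which with the parameters above produces the factor $\Gamma(s+\tfrac32)$, the numerator $(4\pi n)^{1/2}$, the denominator $(s+1)(4\pi)^{s+3/2}(n+N)^{s+3/2}$, and the hypergeometric factor ${}_2F_1\!\left(1,s+\tfrac32;s+2;\tfrac{N}{n+N}\right)$. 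The key step is then to convert the argument $\tfrac{N}{n+N}$ into $-\tfrac{N}{n}$. For this I would use the Pfaff transformation ${}_2F_1(a,b;c;z) = (1-z)^{-b}\,{}_2F_1\!\left(c-a,b;c;\tfrac{z}{z-1}\right)$ with $(a,b,c)=(1,s+\tfrac32,s+2)$ and $z=\tfrac{N}{n+N}$; this is the form that yields the lower parameter $c-a=s+1$. Here $1-z = \tfrac{n}{n+N}$ and $\tfrac{z}{z-1} = -\tfrac{N}{n}$, so ${}_2F_1\!\left(1,s+\tfrac32;s+2;\tfrac{N}{n+N}\right) = \bigl(\tfrac{n+N}{n}\bigr)^{s+3/2}\,{}_2F_1\!\left(s+1,s+\tfrac32;s+2;-\tfrac{N}{n}\right)$.

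Substituting back, the $(n+N)^{s+3/2}$ from the Pfaff prefactor cancels the $(n+N)^{s+3/2}$ in the denominator, and combining $(4\pi n)^{1/2}$ with $(4\pi)^{-(s+3/2)}n^{-(s+3/2)}$ collapses to $(4\pi n)^{-(1+s)}$, giving exactly the claimed right-hand side. The only real obstacle here is bookkeeping: matching the four parameters of 6.455(2), selecting the correct one of the two Pfaff transformations, and verifying that the powers of $4\pi$, $n$, and $n+N$ telescope as stated; no analytic subtlety arises beyond the convergence remark. As a self-contained alternative to citing the table, one could instead write $\Gamma(\tfrac12,4\pi ny)=\int_{4\pi ny}^\infty t^{-1/2}e^{-t}\,dt$, interchange the order of integration by Fubini, evaluate the inner $y$-integral as a Gamma integral, and recognize the remaining $t$-integral as an Euler-type representation of ${}_2F_1$, which reproduces 6.455(2) directly.
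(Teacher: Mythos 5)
Your proposal is correct and follows essentially the same route as the paper: both apply G\&R 6.455 with $\mu=s+1$, $\nu=\tfrac12$, $\alpha=4\pi n$, $\beta=4\pi N$, and then a Pfaff linear transformation (the paper cites it as DLMF 15.8.1 with $a=1$, $b=\tfrac32+s$, $c=2+s$, $z=\tfrac{N}{N+n}$) to move the argument to $-\tfrac{N}{n}$. Your bookkeeping of the prefactors checks out, and the added remarks on convergence and analytic continuation are a harmless bonus.
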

\begin{proof}
Use (6.455) of \cite{GradshteynRyzhik} with $\mu = s+1$, $\nu=\frac{1}{2}$, $\alpha = 4 \pi n$, and $\beta = 4 \pi N$. Then apply 15.8.1 \cite{NIST:DLMF} with $a = 1$, $b = \frac{3}{2}+s$, $c = 2+s$, and $z = \frac{N}{N+n}$.
\end{proof}

Specializing to $s =0$ we obtain the following.
\begin{lemma}\label{thm:harmonic-int-0}
For $m,n,N>0$, where $N=m^2-n$, we have
$$
\int_0^{\infty} \Gamma(1/2, 4 \pi ny) e^{-4 \pi Ny} dy = \frac{1}{4 \sqrt{\pi}  (m + \sqrt{n})m}
$$
\end{lemma}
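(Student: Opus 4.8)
The plan is to obtain this simply as the $s=0$ specialization of \Cref{thm:harmonic-int}, the only real work being to evaluate the resulting Gaussian hypergeometric value in closed form. Setting $s=0$ in that lemma and using $\Gamma(3/2) = \sqrt{\pi}/2$, I would reduce the claim to showing
$$
\frac{\sqrt{\pi}}{2}\cdot\frac{{}_2F_1\!\left(1, \tfrac{3}{2}; 2; -\tfrac{N}{n}\right)}{4\pi n} = \frac{1}{4\sqrt{\pi}\,(m+\sqrt{n})m},
$$
so that everything hinges on identifying ${}_2F_1(1, \tfrac{3}{2}; 2; z)$ at $z = -N/n$.

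The key step is to recognize that this particular hypergeometric is elementary. Since the upper parameter $1$ and lower parameter $2$ differ by one, the coefficient ratio $(1)_k/(2)_k = 1/(k+1)$ collapses the series to a single elementary integral,
$$
{}_2F_1\!\left(1, \tfrac{3}{2}; 2; z\right) = \int_0^1 (1 - zt)^{-3/2}\,dt = \frac{2}{z}\left(\frac{1}{\sqrt{1-z}} - 1\right),
$$
valid for $z < 0$ by direct integration (and elsewhere by analytic continuation). I expect this recognition to be the only nontrivial point; once it is in hand the rest is algebra.

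Finally I would substitute $z = -N/n$ and use $N = m^2 - n$. This gives $1 - z = m^2/n$, hence $\sqrt{1-z} = m/\sqrt{n}$ since $m, n > 0$, and after clearing denominators the factorization $m^2 - n = (m-\sqrt{n})(m+\sqrt{n})$ cancels the factor $m - \sqrt{n}$, leaving
$$
{}_2F_1\!\left(1, \tfrac{3}{2}; 2; -\tfrac{N}{n}\right) = \frac{2n}{m(m+\sqrt{n})}.
$$
Plugging this into the displayed reduction and simplifying $\sqrt{\pi}/(4\pi) = 1/(4\sqrt{\pi})$ yields exactly the stated value. The main obstacle is thus purely the closed-form evaluation of the ${}_2F_1$; the cancellation of $m - \sqrt{n}$ afforded by $N = m^2 - n$ is precisely what produces the clean product in the denominator.
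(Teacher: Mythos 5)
Your proof is correct and takes essentially the same route as the paper: specialize \Cref{thm:harmonic-int} to $s=0$ and evaluate ${}_2F_1\left(1,\tfrac32;2;-\tfrac{N}{n}\right)$ in closed form, with the hypothesis $N=m^2-n>0$ guaranteeing the legitimate cancellation of $m-\sqrt{n}$. The only difference is that you derive the elementary evaluation of this ${}_2F_1$ yourself via the integral $\int_0^1(1-zt)^{-3/2}\,dt$, whereas the paper simply cites 15.4.18 of the DLMF with $a=1$; your closed form agrees with that identity.
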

\begin{proof}
Evaluate the hypergeometric function in the previous lemma using 15.4.18 of \cite{NIST:DLMF} with $a=1$.
\end{proof}

%%%%%%

%%%%%
\subsection{Fourier calculation of projection of mixed sesquiharmonic forms}

\begin{theorem}\label{thm:proj-computation}
Let $F \in V_{1/2} (4N)$ have a Fourier expansion as in \eqref{eq:SesquiFourier}. Let $g = \sum_{m=1}^{\infty} \ell(m) q^{m^2} \in S_{\frac{3}{2}} (4N, \chi)$ be a cusp form with coefficients supported on square indices. We furthermore assume that $F$ and $g$ satisfy the properties that $F \cdot g \in \mathbb{S}_2(\Gamma_1(4N))$ and for any $h >0$, for all $\epsilon >0$ we have $b(m^2 - h) \ell(m) = O(m^{2 + \epsilon})$ as $m \to \infty$, where $b(m)$ is as in \eqref{eq:SesquiFourier}. 
Then we have
\[
    \pi_2^{hol} (F g) (z) = \sum_{h=1}^\infty a_h q^h,
\]
where
\begin{align*}
    &a_h = \left(\sum_{\substack{n+m^2 = h \\ n, m > 0}} a(n) \ell(m) \alpha_{n, m} \right) + \sum_{\substack{n + m^2 = h \\ m>0 \\ n \neq 0}} c(n) \ell(m) \\
    &+  \delta_{\square}(h) \ell(\sqrt{h}) \left(\left(d_0 - d_1(\log(4\pi h) + \gamma)\right) + \frac{1}{4 \sqrt{h}}\left( d_2 + d_3(2- \log(16 \pi h) - \gamma \right) \right) \\
    &+  \sqrt{\pi} \sum_{h=1}^{\infty} h q^h  \left( \sum_{\substack{ n + m^2 = h \\ n \ll \infty \\ m > 0}} \!\!\!\!\! \frac{ b(n) \ell(m)}{ (m + \sqrt{|n|})m} \right).
\end{align*}
Here $\alpha_{n,m}$ %and $\beta_{n,m}$
is as defined in \eqref{eq:alpha-defn}
and $\delta_{\square}(N)$ is defined to be $1$ if $N$ is a perfect square and $0$ otherwise.
\end{theorem}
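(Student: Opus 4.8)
The plan is to apply the Gross--Zagier holomorphic projection formula (Theorem \ref{thm:proj-formula}) with $k=2$ to the weight-$2$ function $Fg$, which by hypothesis lies in $\mathbb{S}_2(\Gamma_1(4N))$. Writing $Fg = \sum_h a(h,y)q^h$, the coefficient $a(h,y)$ is obtained by multiplying the Fourier expansion \eqref{eq:SesquiFourier} of $F$ against $g = \sum_{m\ge1}\ell(m)q^{m^2}$ and collecting the terms with $n + m^2 = h$. This decomposes $a(h,y)$ into four pieces coming from the constant, holomorphic, harmonic, and sesquiharmonic parts of $F$, and I would compute $a_h = \lim_{s\to0}4\pi h\int_0^\infty a(h,y)e^{-4\pi hy}y^s\,dy$ piece by piece.

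The holomorphic part is immediate: since $\sum_n c(n)q^n\cdot g$ contributes the $y$-independent quantity $\sum_{n+m^2=h}c(n)\ell(m)$ to $a(h,y)$, and $4\pi h\int_0^\infty e^{-4\pi hy}\,dy = 1$, it passes through unchanged. The constant part contributes only when $h$ is a square (forcing $m=\sqrt h$), and the four resulting integrals $\int_0^\infty y^{j/2}(\log y)^{\varepsilon}e^{-4\pi hy}\,dy$ are exactly those evaluated in Lemma \ref{thm:d1integral}; substituting them produces the $\delta_{\square}(h)\ell(\sqrt h)$ term with its $\gamma$- and $\log$-dependence. For the sesquiharmonic part the convolution $n+m^2=h$ with $n,m>0$ is finite, and at $s=0$ the integral $4\pi(n+m^2)\int_0^\infty\alpha(4ny)e^{-4\pi(n+m^2)y}\,dy$ is precisely $\alpha_{n,m}$ by the definition \eqref{eq:alpha-defn} (with closed form supplied by Lemma \ref{thm:alphaintegral}), yielding $\sum_{n+m^2=h}a(n)\ell(m)\alpha_{n,m}$.

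The harmonic part is where the real work lies, and I expect it to be the main obstacle. Here the convolution $n+m^2=h$ with $n<0$ runs over all $m>\sqrt h$, so $a(h,y)$ contains the infinite sum $\sum_{m>\sqrt h}b(h-m^2)\ell(m)\beta\!\left(4(m^2-h)y\right)$, and under the growth hypothesis $b(m^2-h)\ell(m)=O(m^{2+\epsilon})$ this converges for $y>0$ but diverges as $y\to0$, so integration and summation cannot simply be exchanged at $s=0$. My plan is to keep $s$ with $\operatorname{Re}(s)$ large, where the interchange is legitimate, evaluate each term by Lemma \ref{thm:harmonic-int}, and observe that the resulting $m$-sum is a shifted convolution Dirichlet series of the type studied in Section \ref{sec:shiftedconvolution}: the substitution $\sigma = s+\tfrac12$ matches it (up to hypergeometric factors tending to $1$) with $D_h(\sigma)$, whose coefficients $b(h-m^2)$ are those of the harmonic shadow of $F$. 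I would then invoke the meromorphic continuation of Proposition \ref{thm:ShiftedDirichletSeries} together with the polynomial bound of Proposition \ref{thm:Theorem9.1} to continue to and take the limit $s\to0$ (equivalently $\sigma\to\tfrac12$, the edge of the proven analyticity region), justifying the exchange of limit and summation by the uniform control these afford. The per-term value of the limit is computed by Lemma \ref{thm:harmonic-int-0}, producing the harmonic contribution $\tfrac{h\,b(n)\ell(m)}{(m+\sqrt{|n|})m}$ recorded in the statement, with the overall constant determined by the normalization of $\beta$.

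Assembling the four contributions gives the stated formula for $a_h$, and membership of $\pi_2^{hol}(Fg)$ in $S_2(\Gamma_1(4N))$ is then immediate from Theorem \ref{thm:proj-formula}. The delicate points to get right are (i) verifying that the harmonic coefficients $b(n)$ of $F$ are exactly the coefficients appearing in the shifted convolution series of Section \ref{sec:shiftedconvolution}, via the $\xi$-operator identities established in the proof of Proposition \ref{prop:fourier-expansion}, and (ii) the passage to the boundary $\sigma=\tfrac12$, where absolute convergence of the harmonic sum fails and one must rely on the spectral continuation rather than on a naively convergent series.
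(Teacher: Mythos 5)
Your decomposition of $Fg$ into constant, holomorphic, harmonic, and sesquiharmonic pieces, and your treatment of the first, second, and fourth of these via Lemmas \ref{thm:d1integral} and \ref{thm:alphaintegral}, match the paper's proof exactly. The gap is in the harmonic piece, and it is precisely at the point you flag as delicate: your plan is to identify the term-by-term $m$-sum with $D_h(\sigma)$ at $\sigma = s+\tfrac12$ and then use the meromorphic continuation of Proposition \ref{thm:ShiftedDirichletSeries} and the bound of Proposition \ref{thm:Theorem9.1} to pass to $s\to 0$, i.e.\ $\sigma\to\tfrac12$. But $\sigma=\tfrac12$ is not ``the edge of the proven analyticity region'' --- Proposition \ref{thm:ShiftedDirichletSeries} gives analyticity only on $\operatorname{Re}(\sigma)>\tfrac12+\theta$ with $\theta>0$, and Proposition \ref{thm:Theorem9.1} controls $D_h$ only in the strip $\operatorname{Re}(\sigma)\in(\tfrac12+\epsilon,\tfrac32+\epsilon)$ and only away from poles, which are not excluded between $\tfrac12$ and $\tfrac12+\theta$. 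So the continuation you want to invoke does not reach the point you need. Moreover, even granting analyticity at $\sigma=\tfrac12$, you would still owe an Abelian argument identifying the value of the continuation with the conditionally convergent series $\sum b(n)\ell(m)/\bigl((m+\sqrt{|n|})m\bigr)$ that appears in the statement; analytic continuation of a Dirichlet series past its abscissa of convergence does not by itself evaluate the boundary series.

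The paper's actual mechanism is more elementary and avoids the continuation entirely. First, for fixed $s>0$ the interchange of the $m$-sum with the $y$-integral is justified by dominated convergence: the hypothesis $b(m^2-h)\ell(m)=O(m^{2+\epsilon})$ gives $|b(m^2-h)\ell(m)e^{-2\pi m^2 y}|\le C_\epsilon y^{-\epsilon-1/2}$, so the integrand is dominated by $y^{s-\epsilon-1/2}(\theta(iy)-1)$, which is integrable on $(0,\infty)$. Second --- and this is the step your proposal is missing --- the resulting series of integrals is shown to converge \emph{uniformly} for $s$ near $0$ by combining Proposition \ref{thm:ShiftedConvBound} (the power-saving bound $\sum_{m\ll X}\overline{c^+(m^2-h)}\ell(m)\ll_h X^{3/2-\delta}$ on partial sums) with partial summation against the $O(m^{-2-2s})$ weights coming from Lemma \ref{thm:harmonic-int}. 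This uniform convergence is what lets one move $\lim_{s\to0}$ inside the sum and then evaluate each term with Lemma \ref{thm:harmonic-int-0}. Note that Proposition \ref{thm:ShiftedConvBound} is itself proved by Perron's formula using the continuation of $D_h$ only in the region $\operatorname{Re}(\sigma)>\tfrac12+\theta$ where it is actually established; the passage to $\sigma=\tfrac12$ is never made analytically. If you replace your continuation-to-the-boundary step with this partial-summation argument, your proof goes through; your remaining caveat (i), that the coefficients fed into Section \ref{sec:shiftedconvolution} must be the harmonic coefficients $b(n)$ of $F$, is a fair point on which the paper itself is terse.
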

\begin{proof}
Write $F(z) = F_0(z) + F_1(z) + F_2(z) + F_3(z) $ for the four sums in \eqref{eq:SesquiFourier}. We use Theorem \ref{thm:proj-formula} to compute the holomorphic projection of $F_i g$ for each $i$. Recall that from \eqref{eq:SesquiFourier}, the constant term $F_0$ is given by
    $$
    F_0(z) = d(0) + d(1) \log (y) + d(2) y^{\frac{1}{2}} + d(3)  y^{\frac{1}{2}} \log (y).
    $$

   When applying Theorem \ref{thm:proj-formula} to compute $\pi_2^{hol}(F_0 \cdot g)$, interchanging the integral and limit is justified by the exponential decay of the integrand at infinity and the slow growth at 0. Combining \Cref{thm:proj-formula} and \Cref{thm:d1integral} we obtain
    \begin{singnumalign}\label{eq:proj-constant}
        \pi_2^{hol}(F_0 \cdot g  )(z)=& \sum_{m=1}^{\infty} \ell(m) \biggr(\left(d(0) - d(1)(\log(4\pi m^2) + \gamma)\right) \\
        &\hspace{20mm}+ \frac{1}{4 m}\left( d(2) + d(3)(2- \log(16 \pi m^2) - \gamma \right) \biggr) q^{m^2}.
    \end{singnumalign}
   The holomorphic part $F_1(\tau)g(\tau)$ is fixed by the holomorphic projection operator, i.e.
    \begin{equation}\label{eq:proj-holo}
        \pi_2^{hol}(F_1 \cdot g)(z) = \sum_{h=1}^{\infty} \sum_{\substack{n + m^2 = h \\ n \gg  -\infty \\ m > 0}} c(n) \ell(m) q^h.
    \end{equation}
   Next we turn our attention to the harmonic part 
    \begin{align*}
        F_2(z)g(z) &= \left( \sum_{n \ll \infty} b(n) \beta(-4ny)q^n \right) \left( \sum_{m > 0} \ell(m) q^{m^2} \right) \\
      &= \sum_{h =1}^{\infty} \left( \sum_{\substack{n+m^2 = h \\ n \ll \infty \\ m > 0 }} b(n) \ell(m) \beta(-4n y) \right) q^h.
    \end{align*}
    By Theorem \ref{thm:proj-formula} we have
    \begin{singnumalign}\label{eq:proj-harm}
        \pi_2^{hol}(F_2 \cdot g  )(z)&= \sum_{h=1}^{\infty} 4\pi h \lim_{s \to 0 } \left( \int_0^{\infty} y^s \!\!\!\!\!\! \sum_{\substack{ n + m^2 = h \\ n \ll \infty \\ m > 0}} \!\! b(n) \ell(m) \left(  \beta(-4 ny)e^{-4\pi(n+m^2)y}\right) dy \right) q^h \\
    \end{singnumalign}
           
    Let $0 < \epsilon < s$. Using our assumption on $b(m^2 - h) \ell(m)$, we find that there exists a constant $C_{\epsilon}$ such that for all $y,m > 0$, we have
    $$
    |b(m^2 - h) \ell(m) e^{- 2 \pi m^2 y}| \le C_{\epsilon} y^{- \epsilon - \frac{1}{2}} .
    $$

    Therefore we can bound the integrand in \eqref{eq:proj-harm} by a constant multiple of
    \begin{equation}\label{eq:bound}
    y^s y^{- \epsilon - \frac{1}{2}} \sum_{m=1}^{\infty}  e^{- 2 \pi m^2 y} =   y^{s- \epsilon - \frac{1}{2}} (\theta(iy) - 1).
    \end{equation}
    This function is convergent for all $y \in (0,\infty)$. It decays exponentially as $y \to \infty$. Using the transformation law for the Jacobi theta function, $\theta(iy)-1 = O(y^{-1/2})$ as $ y \to 0$. It follows that the function in \eqref{eq:bound} is integrable on $(0,\infty)$, and therefore we may swap the inner sum with the integral in \eqref{eq:proj-harm}.            
    \begin{align*}
        \pi_2^{hol}&(F_2 \cdot g (z)) = \sum_{h=1}^{\infty} 4\pi h \lim_{s \to 0} \left( \sum_{\substack{ n + m^2 = h \\ n \ll \infty \\ m > 0}} \!\!\!\!\! b(n) \ell(m)  \int_0^{\infty} y^s \left(  \beta( - 4 ny)e^{-4\pi(n+m^2)y}\right) dy \right) q^h \\
        &= \sum_{h=1}^{\infty} 4\pi h \lim_{s \to 0} \left( \sum_{\substack{ n + m^2 = h \\ n \ll \infty \\ m > 0}} \!\!\!\!\! b(n) \ell(m)  \int_0^{\infty} y^s \frac{1}{\sqrt{\pi}} \Gamma \left( \frac{1}{2}, - 4 \pi n y \right) e^{-4\pi(n+m^2)y} dy \right) q^h \\
        \end{align*}
        
      A calculation using Proposition \ref{thm:ShiftedConvBound} shows that the inner sum is uniformly convergent in $s$ near 0, so we can swap the limit and sum and obtain, using Lemma \ref{thm:harmonic-int-0},
\begin{singnumalign}\label{eq:proj-harm-interchanged}
\pi_2^{hol}&(F_2 \cdot g (z)) = \sum_{h=1}^{\infty} 4\pi h \left( \sum_{\substack{ n + m^2 = h \\ n \ll \infty \\ m > 0}} \!\!\!\!\! b(n) \ell(m)  \int_0^{\infty} \frac{1}{\sqrt{\pi}} \Gamma \left( \frac{1}{2}, - 4 \pi n y \right) e^{-4\pi(n+m^2)y} dy \right) q^h \\
&= \sqrt{\pi} \sum_{h=1}^{\infty} h q^h  \left( \sum_{\substack{ n + m^2 = h \\ n \ll \infty \\ m > 0}} \!\!\!\!\! \frac{ b(n) \ell(m)}{ (m + \sqrt{|n|})m} \right) \\
\end{singnumalign}
      
    And finally, we have the sesquiharmonic piece
    \begin{align*}
        F_3(z)g(z) = \sum_{h \geq 1} \left( \sum_{\substack{n+m^2 = h \\ n, m > 0}} a(n) \ell(m) \alpha(4n y) \right) q^h.
    \end{align*}
    We interchange the limit and integral in Theorem \ref{thm:proj-formula}, and using \Cref{thm:alphaintegral}, we obtain
    \begin{singnumalign}\label{eq:proj-nonharm}
        \pi_2^{hol}(F_3 \cdot g )(z)&= \sum_{h=1}^{\infty} 4\pi h \left( \sum_{\substack{n+m^2 = h \\ n, m > 0}} a(n)\ell(m) \int_0^{\infty} \alpha(4ny)e^{-4\pi (n+m^2)y}dy \right) q^h \\
        &= \sum_{h=1}^\infty 
        \left(\sum_{\substack{n+m^2 = h \\ n, m > 0}} a(n) \ell(m) \alpha_{n,m} \right)q^h
    \end{singnumalign}
    Combining \eqref{eq:proj-constant}, \eqref{eq:proj-holo}, \eqref{eq:proj-harm-interchanged}, and \eqref{eq:proj-nonharm} completes the proof.
\end{proof}
%

%%%%
\subsection{Proof of Theorem \ref{thm:AAS-projection}}
Let $F = Z$ and $g = \theta_{\chi}$ in Theorem \ref{thm:proj-computation}.

%%%%%%%%
\section{Example}\label{sec:numerics}

Computing out to 50000 terms in the harmonic part of $r_{\chi_4}(k)$ gives us the approximation
\begin{equation}\label{eq:approximated-projection}
    \pi_2^{hol}\left((Z(\tau) \theta_{\chi_4}(\tau)\right)) \approx (0.028599) f_1 + (0.0000017) f_2 + (0.0579284) f_3,
\end{equation}

where the $f_i \sum_{n=1}^{\infty} c_i(n) q^n$ are the holomorphic cusp forms spanning $S_2(64)$ as defined in the introduction. We note that the $n^{th}$ coefficient of both the right and left hand sides can be seen directly to be zero if $n \equiv 0,3 \pmod{4}$.  Computing the left hand side of \eqref{eq:approximated-projection} using Theorem \ref{thm:AAS-projection} requires a truncation of the harmonic sums. In the table below, we compute several values of $r_{\chi_4}(k)$---as defined in \Cref{thm:AAS-projection} and approximating the harmonic contribution by $T_0(N)$ truncated to 10000 terms---and compare to the right hand side of \eqref{eq:approximated-projection}.  These computations were done using SageMath \cite{Sagemath}.
\begin{figure}[h!]
    \begin{tabular}{|c|c|c|c||c|c|c|c|}
        \hline
        $k$ & Numerical & Expected & Abs. Error & $k$ & Numerical & Expected & Abs. Error \\
        \hline
        1 & 0.0289 & 0.0286 & 0.0003 & 50 & -0.0573 & -0.0579 & 0.0006 \\
        \hline
        2 & 0.058 & 0.0579 & 0.0001 & 53 & -0.4032 & -0.4004 & 0.0028 \\
        \hline
        5 & 0.0577 & 0.0572 & 0.0005 & 54 & -0.001 & 0 & 0.001 \\
        \hline
        6 & -0.0001 & 0 & 0.0001 & 57 & 0 & 0 & 0 \\
        \hline
        9 & -0.0869 & -0.0858 & 0.0011 & 58 & -0.5769 & -0.5793 & 0.0024 \\
        \hline
        10 & -0.1163 & -0.1159 & 0.0004 & 61 & 0.2866 & 0.286 & 0.0006 \\
        \hline
        13 & -0.1738 & -0.1716 & 0.0022 & 62 & -0.0011 & 0 & 0.0011 \\
        \hline
        14 & 0.0001 & 0 & 0.0001 & 65 & -0.3501 & -0.3432 & 0.0069 \\
        \hline
        17 & 0.0576 & 0.0572 & 0.0004 & 66 & -0.002 & 0 & 0.002 \\
        \hline
        18 & -0.174 & -0.1738 & 0.0002 & 69 & -0.0006 & 0 & 0.0006 \\
        \hline
        21 & -0.0002 & 0 & 0.0002 & 70 & -0.0005 & 0 & 0.0005 \\
        \hline
        22 & -0.0005 & 0 & 0.0005 & 73 & -0.1737 & -0.1716 & 0.0021\\
        \hline
        25 & -0.03 & -0.0286 & 0.0014 & 74 & -0.115 & -0.1159 & 0.0009 \\
        \hline
        26 & 0.3463 & 0.3476 & 0.0013 & 77 & 0 & 0 & 0 \\
        \hline
        29 & 0.2898 & 0.286 & 0.0038 & 78 & 0.0012 & 0 & 0.0012 \\
        \hline
        30 & 0.0001 & 0 & 0.0001 & 81 & 0.2559 & 0.2574 & 0.0015 \\
        \hline
        33 & -0.0001 & 0 & 0.0001 & 82 & 0.5786 & 0.5793 & 0.0007 \\
        \hline
        34 & 0.1149 & 0.1159 & 0.001 & 85 & 0.1167 & 0.1144 & 0.0023 \\
        \hline
        37 & 0.0556 & 0.0572 & 0.0016 & 86 & -0.0004 & 0 & 0.0004 \\
        \hline
        38 & 0.006 & 0 & 0.006 & 89 & 0.2891 & 0.286 & 0.0031 \\
        \hline
        41 & 0.2894 & 0.286 & 0.0034 & 90 & 0.344 & 0.3476 & 0.0036 \\
        \hline
        42 & -0.0022 & 0 & 0.0022 & 93 & -0.0024 & 0 & 0.0024 \\
        \hline
        45 & -0.1746 & -0.1716 & 0.0058 & 94 & -0.0009 & 0 & 0.0009 \\
        \hline
        46 & -0.001 & 0 & 0.001 & 97 & 0.5199 & 0.5148 & 0.0051 \\
        \hline
        49 & -0.2038 & -0.2002 & 0.0036 & 98 & -0.4276 & -0.4055 & 0.0221 \\
        \hline
    \end{tabular}
    \caption{The numerical column gives approximations to the nearest $10,000^{th}$ for $r_{\chi_4}(k)$ using the holomorphic projection computed in \Cref{thm:proj-computation}, the expected column gives $r_{\chi_4}(k)$ computed as a linear combination of Fourier coefficients of modular forms as in the right-hand side of \Cref{eq:approximated-projection}.}
\end{figure}

We note a few arithmetic properties of the $r_{\chi_4}(k)$ that are consequences of Theorem \ref{thm:AAS-projection}. For example, $f_1,f_2$,and $f_3$ have integer coefficients, and the Fourier expansions of $f_1$, and $f_2$ are supported on indices which are $1 \pmod{4}$, while the Fourier expansion of $f_3$ is supported on indices which are 2 modulo 4. Furthermore, $f_1$ is the twist of $f_2$ by the Kronecker character $\chi_8$ (it is easy to verify this by checking that the minimal Weierstrass equation of the elliptic curve associated to $f_1$, which has Cremona label 64a3, is the discriminant 8 quadratic twist of the elliptic curve associated to $f_2$, which has Cremona label 32a4). Consequently:
$$
r_{\chi_4} (k) \in 
\begin{cases}
r_{\chi_4} (2) \mathbb{Z} & k \equiv 2 \pmod{4} \\
r_{\chi_4} (1) \mathbb{Z} & k \equiv 1 \pmod{8} \\
\frac{1}{2} r_{\chi_4}(5) \mathbb{Z} & k \equiv 5 \pmod{8}. 
\end{cases}
$$
 Numerically we seem to have $r_{\chi_4}(1) = \frac{1}{2} r_{\chi_4}(5)$, which would of course allow us to combine the second and third cases. 
 
 Additionally, the Hecke relations for the $f_i$ produce corresponding relations for the $r_{\chi_4}(k)$. For odd primes $p$, we have
 $$
  T_p f_i := \sum_{n=1}^{\infty} (c_i(np) + p c_i(n/p) ) q^n = \begin{cases}
      c_i(p) f_i & i = 1,2 \\
      c_2(p) f_3 & i =3.
  \end{cases}
 $$
 From this, we see that for $(n,p)=1$, we have
$$
r_{\chi_4}(2pn) = c_2(p) r_{\chi_4}(2n)
$$
and if $p \equiv 1 \pmod{8}$ (i.e. $c_1(p) = c_2(p)$), 
then
$$
r_{\chi_4}(pn) = c_1(p) r_{\chi_4}(n)
$$
for all $n$ co-prime to $2p$. From the fact that $c_1(p) = c_2(p) = 0$ for $p \equiv 3 \pmod{4}$, we have $r_{\chi_4}(k)=0$ if $p||k$ and $k$ is odd, where $p$ is any prime congruent to 3 modulo 4.

\section*{Acknowledgements}
The first author was partially supported by Simons Foundation Collaboration Grant \#953473 and National Science Foundation Grant DMS-2401356. The authors thank the anonymous referees for their thorough and helpful feedback.

\bibliographystyle{halpha-abbrv}
\bibliography{references.bib}     
\end{document}